\apptocmd{\sloppy}{\hbadness 10000\relax}{}{}
\title{Frobenius liftable hypersurfaces}
\author{Tatsuro Kawakami}
\address{Graduate School of Mathematical Sciences, University of Tokyo, 3-8-1 Komaba,
Meguro-ku, Tokyo 153-8914, Japan}
\email{tatsurokawakami0@gmail.com}
\author{Supravat Sarkar} 
\address{Department of Mathematics \\
Fine Hall, Washington Road\\
Princeton University\\  
Princeton NJ 08544-1000, USA}
\email{ss6663@princeton.edu}
\author{Jakub Witaszek} 
\address{Department of Mathematics \\
Fine Hall, Washington Road\\
Princeton University\\  
Princeton NJ 08544-1000, USA}
\email{jwitaszek@princeton.edu}
\def\phi{\varphi}
\def\epsilon{\varepsilon}
\def\tilde{\widetilde}
\def\log{\operatorname{log}}
\def\Spec{\operatorname{Spec}}
\def\codim{\operatorname{codim}}
\def\Pic{\operatorname{Pic}}
\def\Gr{\operatorname{Gr}}
\newcommand{\Q}{\mathbb{Q}}
\newcommand{\Z}{\mathbb{Z}}
\newcommand{\sO}{\mathcal{O}}
\newcommand{\sHom}{\mathcal{H}\! \mathit{om}}
\newcommand{\stacksproj}[1]{{\cite[\href{https://stacks.math.columbia.edu/tag/#1}{Tag~{#1}}]{stacks-project}}}
\newsavebox{\pullback}
\sbox\pullback{%
\begin{tikzpicture}%
\draw (0,0) -- (1ex,0ex);%
\draw (1ex,0ex) -- (1ex,1ex);%
\end{tikzpicture}}
\newsavebox{\pullbackdl}
\sbox\pullbackdl{%
\begin{tikzpicture}%
\draw (-1ex,0ex) -- (0ex,0ex);%
\draw (0ex,-1ex) -- (0ex,0ex);%
\end{tikzpicture}}
\newsavebox{\pushoutdr}
\sbox\pushoutdr{%
\begin{tikzpicture}%
\draw (-1ex,-1ex) -- (-1ex,0ex);%
\draw (-1ex,0ex) -- (0ex,0ex);%
\end{tikzpicture}}
\theoremstyle{plain}
\newtheorem{thm}{Theorem}[section] 
\newtheorem{prop}[thm]{Proposition}
\newtheorem{lem}[thm]{Lemma}
\theoremstyle{definition} 
\newtheorem{defn}[thm]{Definition}
\newtheorem{setting}[thm]{Setting}
\theoremstyle{remark}
\newtheorem{rem}[thm]{Remark}
\newtheorem{defn and notation}[thm]{Definition and Notation}
\newtheorem*{cl}{Claim}
\theoremstyle{plain}
\newtheorem{theo}{Theorem}
\numberwithin{equation}{thm}
\keywords{Frobenius liftable; Toric varieties}
\subjclass[2020]{13A35,14M25}
\begin{document}
\tolerance = 9999

\begin{abstract}
Let $D$ be a reduced divisor in $\mathbb P^n_k$ for an algebraically closed field $k$ of positive characteristic $p > 0$. We prove that if $(\mathbb P^n_k, D)$ is Frobenius liftable modulo $p^2$, then $D$ is a toric divisor. As a corollary, we show that if there exists a finite surjective morphism $f\colon Y\to X$ onto  a smooth projective complex variety $X$ of Picard rank $1$
    such that $(Y, f^{-1}(D)_{\mathrm{red}})$ is a toric pair, 
    then $X$ is the projective space and $D$ is a toric divisor. 
\end{abstract}

\maketitle
\markboth{Tatsuro Kawakami, Supravat Sarkar, and Jakub Witaszek}{FROBENIUS LIFTABLE HYPERSURFACES}


\section{Introduction}

The study of endomorphisms of algebraic varieties is an interesting problem at the intersection of algebraic geometry and dynamics. Our article is motivated by the following well-known conjecture on totally invariant divisors in $\mathbb P^n_{\mathbb C}$. Let $D$ be a prime divisor on $\mathbb P^n_{\mathbb C}$ and let $f \colon \mathbb P^n_{\mathbb C} \to \mathbb P^n_{\mathbb C}$ be an endomorphism of degree $d \geq 2$ such that  $f^{-1}(D)_{\rm red} = D$. Then $D$ is conjectured to be linear (see \cite{Horing17, Mabed23, Beauville01} for some partial results). 

In positive characteristic, an analogue of a polarised endomorphism is given by a lift of Frobenius as discussed in \cite{AWZ,AWZ2}. The following is the key result of this article. We refer to Definition \ref{def:F-lift} for the definition of F-liftability for pairs. 
\begin{theo}\label{Introthm:main}
    Let $k$ be an algebraically closed field of characteristic $p>0$ and
    let $D$ be a reduced divisor on $\mathbb{P}_k^n$.
  If $(\mathbb{P}^n_k, D)$ is $F$-liftable, then $D$ is a toric divisor with respect to the standard toric structure on $\mathbb{P}^n_k$, up to an automorphism of $\mathbb{P}^n_k$.
\end{theo}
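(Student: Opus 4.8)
The overall strategy is to leverage the theory of $F$-liftable varieties developed in the Achinger--Witaszek--Zdanowicz papers. The key known fact is that if $X$ is a smooth projective $F$-liftable variety, then $X$ is toric (this is the main theorem of \cite{AWZ} in the relevant cases, or at least it is known for Fano varieties / varieties of Picard rank one like $\mathbb P^n$). So the starting point is that $\mathbb P^n_k$ itself is $F$-liftable, which is automatic, but we need the \emph{pair} version: $(\mathbb P^n_k, D)$ being $F$-liftable should force $D$ to be compatible with a toric structure. Let me sketch this.

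First I would unwind the definition of $F$-liftability for the pair $(\mathbb P^n_k, D)$. This means there is a lift $\widetilde{\mathbb P^n}$ of $\mathbb P^n_k$ over $W_2(k)$ together with a lift $\widetilde D$ of $D$ and a lift of the (absolute or relative) Frobenius morphism compatible with these. The crucial consequence, as in the $F$-split / $F$-liftable philosophy, is that the logarithmic cotangent sheaf $\Omega^1_{\mathbb P^n}(\log D)$ becomes a direct summand of $F_*$ of something, or more precisely that the extension class controlling the obstruction vanishes in a way that splits $\Omega^1_{\mathbb P^n}(\log D)$ into line bundles. Concretely, $F$-liftability of $(\mathbb P^n, D)$ yields a splitting of the logarithmic de Rham / logarithmic cotangent complex after pushforward by Frobenius; combined with the fact that on $\mathbb P^n$ the Frobenius pushforward $F_*\mathcal O_{\mathbb P^n}$ decomposes into line bundles, one extracts that $\Omega^1_{\mathbb P^n}(\log D)$ is a direct sum of line bundles $\bigoplus \mathcal O(a_i)$.

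Next, I would identify the splitting type. The Euler sequence shows $\deg \Omega^1_{\mathbb P^n}(\log D) = -(n+1) + \deg D$, and by restricting to lines and using the numerical constraints (the summands must be $\mathcal O(-1)$ or $\mathcal O$ and so on, as dictated by the log structure — e.g. each component of $D$ contributes), one pins down that $\Omega^1_{\mathbb P^n}(\log D) \cong \mathcal O_{\mathbb P^n}^{\oplus n}$ exactly when $D$ has $n+1$ components in general position, and more generally that the splitting forces $D$ to be a union of hyperplanes. This is essentially a logarithmic analogue of the Van de Ven / characterization of $\mathbb P^n$ via split tangent bundle. Once one knows $D$ is a hyperplane arrangement, a further splitting/normal-crossing analysis forces the arrangement to be in general position with exactly $n+1$ hyperplanes (or a subarrangement thereof), which up to $\mathrm{PGL}_{n+1}$ is the standard toric boundary; a sub-divisor of the toric boundary is still a toric divisor, so we are done.

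**The main obstacle.** The hard part will be the second step: extracting the splitting of $\Omega^1_{\mathbb P^n}(\log D)$ into line bundles from $F$-liftability modulo $p^2$, and doing so \emph{without} a priori knowing $D$ is simple normal crossing (the sheaf $\Omega^1(\log D)$ may be subtle when $D$ is not snc, so one may need to work with a reflexive or a residue-adjusted version, or first prove $D$ is snc). Controlling the non-snc locus — showing that $F$-liftability propagates to the pair structure even at the singularities of $D$, and that the local analysis there still forces a toric (monomial) local model — is where the real work lies. One likely needs the $W_2$-lifting of Frobenius to produce local Frobenius-compatible coordinates à la the toric case, then glue; alternatively, one argues that a liftable pair has an $F$-split structure which, by the theory of compatibly split divisors, forces $D$ to be a sum of "coordinate" divisors. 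Managing this gluing/local-to-global step, and the possibility that $n$ or $p$ is small, is the crux.
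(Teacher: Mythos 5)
Your proposal takes a genuinely different route from the paper, but the central step is a gap rather than a known consequence of $F$-liftability. You claim that $F$-liftability mod $p^2$ of $(\mathbb{P}^n, D)$ forces $\Omega^1_{\mathbb{P}^n}(\log D)$ to decompose into line bundles, citing (i) the Deligne--Illusie-type splitting of the Frobenius-pushed logarithmic de Rham complex and (ii) the fact that $F_*\mathcal{O}_{\mathbb{P}^n}$ splits into line bundles. Neither of these yields the desired decomposition: what $F$-liftability actually produces is a split \emph{surjection} $F_*\Omega^i_U(\log D) \to \Omega^i_U(\log D)$ (used in the paper for log Bott vanishing, Proposition~\ref{prop:log Bott}), and a $p$-linear endomorphism $\xi\colon F^*\Omega^{[1]}_X(\log D) \to \Omega^{[1]}_X(\log D)$; but being a direct summand of a Frobenius pushforward of a log differential sheaf does not make a vector bundle decompose into line bundles. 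Even in AWZ, the passage from $F$-liftability to a ``split'' or ``toric'' structure is not achieved by any such direct splitting argument --- it relies on the \'etale ``magic cover'' sheaf $(\Omega^1_X)^{\xi}$, constructibility, and monodromy/simple-connectedness considerations, exactly the machinery this paper adapts. Without a real argument, your step 1 is an assertion, not a proof, and I don't believe it is true in the generality stated (for instance, one would have to check it carefully already for a single hyperplane and for a smooth conic in $\mathbb{P}^2$, where the conclusions would differ drastically).

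Even granting the splitting, your step 2 --- that a line-bundle decomposition of $\Omega^1(\log D)$ forces $D$ to be a hyperplane arrangement --- is itself a nontrivial statement in the spirit of free-arrangement theory (Terao, Dolgachev), and your sketch via ``numerical constraints on a line'' does not establish it. By contrast, the paper works at the level of global sections: it first proves $H^0(\mathbb{P}^n, \Omega^{[1]}(\log D)) = 0$ for any ample prime $D$ via log Bott vanishing (Theorem~\ref{thm:global section of F-lift pair}); then, assuming $\deg D \ge 2$, it produces a nonzero section of $\Omega^{[1]}(\log D)$ restricted to a general line (Lemma~\ref{lem:general line}), normalizes it to be $\xi$-invariant, and deforms it over the Grassmannian of lines using the locally constant \'etale sheaf $\Omega^1_V(\log D)^{\xi}$, Zariski--Nagata purity, and simple-connectedness of $\mathbb{P}^n$ (Theorem~\ref{thm:key}). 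That forces a global nonzero section, a contradiction, giving $\deg D' = 1$ for each component; general position of the resulting hyperplanes is then extracted from the lc condition inherited from $F$-purity. The two approaches are structurally different, and the one you propose is missing its engine.
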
 
Note that Theorem \ref{Introthm:main} implies the original conjecture on totally invariant divisors on $\mathbb P^n_{\mathbb C}$ when $f$ is an unramified lift of Frobenius, thus providing more evidence towards its validity.

As discussed in \cite{AWZ}, the study of Frobenius liftability is also related to the Occhetta-Wi\'{s}niewski conjecture predicting that every smooth complex projective variety which is an image of a projective toric variety is automatically toric. This conjecture is known when $\rho(X)=1$ by \cite{Occhetta-Wisniewski}. As a corollary to Theorem \ref{Introthm:main}, we can obtain the following stronger result. 
\begin{theo}\label{Introthm:main2}
    Let $X$ be a smooth projective variety of Picard rank $1$ with $\dim X=n$ over an algebraically closed field $k$ of characteristic zero, and
    let $D$ be a reduced divisor on $X$.
    Suppose there exists a finite surjective morphism $f\colon Y\to X$
    such that $(Y, f^{-1}(D)_{\mathrm{red}})$ is a toric pair.
    Then $X\cong \mathbb{P}^n_k$, and $D$ is a toric divisor with respect to the standard toric structure on $\mathbb{P}^n_k$, up to an automorphism of $\mathbb{P}^n_k$.
\end{theo}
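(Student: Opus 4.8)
The plan is to deduce Theorem~\ref{Introthm:main2} from Theorem~\ref{Introthm:main} by spreading out to positive characteristic, after first identifying $X$ with projective space. To begin, since $f$ is finite and $X$ is projective, $Y$ is a projective toric variety and $X$ is its image under a surjective morphism; as $\rho(X)=1$, the theorem of Occhetta--Wi\'sniewski \cite{Occhetta-Wisniewski} yields an isomorphism $X\cong\mathbb{P}^n_k$. Fixing such an isomorphism, $D$ becomes a reduced divisor on $\mathbb{P}^n_k$, and it remains to show that it is a toric divisor for the standard structure up to an automorphism of $\mathbb{P}^n_k$.

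Next I would spread out the whole situation: choose a finitely generated $\mathbb{Z}$-subalgebra $R\subseteq k$ carrying models of $\mathbb{P}^n=X$, of $D$, of $Y$ together with its torus action, and of $f$. After shrinking $\Spec R$ one may assume $R$ is a domain, every residue field $\kappa(s)$ at a closed point $s\in\Spec R$ has characteristic $p>0$, the map $f_R\colon Y_R\to\mathbb{P}^n_R$ is finite surjective with $(Y_R,B_R)$ a toric pair over $R$ for $B_R:=f_R^{-1}(D_R)_{\mathrm{red}}$, and --- applying generic flatness to $\Omega_{Y_R/\mathbb{P}^n_R}$ --- each $f_s$ is separable. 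Base changing to $\overline{\kappa(s)}$, one then has, for every closed point $s$, a finite surjective separable morphism $f_s\colon Y_s\to\mathbb{P}^n_{\overline{\kappa(s)}}$ with $(Y_s,B_s)$ a toric pair.

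The heart of the argument is to show that $(\mathbb{P}^n_{\overline{\kappa(s)}},D_s)$ is then $F$-liftable in the sense of Definition~\ref{def:F-lift}. On the toric side this is standard: the canonical $W_2(\overline{\kappa(s)})$-model of the toric variety $Y_s$ carries the toric lift of Frobenius (the $p$-th power map on monomial coordinates), which lifts the Frobenius of $Y_s$ and preserves every torus-invariant divisor, hence a lift of $B_s$, so $(Y_s,B_s)$ is $F$-liftable (compare \cite{AWZ}). It then remains to descend $F$-liftability along the finite cover $f_s$, and this is the step I expect to be the main obstacle: a given Frobenius lift on $Y_s$ need not descend through $f_s$ (which may be non-Galois and wildly ramified), so one must produce one that does. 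I would attack this using the descent results for $F$-liftability along finite morphisms in \cite{AWZ,AWZ2}, together with the toric structure of $Y_s$ and the vanishing of higher cohomology of the structure sheaf of $Y_s$ and of $\mathcal{O}_{\mathbb{P}^n}(m)$ for $m\ge0$ --- the latter used to lift $f_s$ and the divisors in play to $W_2$ and to control the torsor of Frobenius lifts.

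Granting this, Theorem~\ref{Introthm:main} shows that for every closed point $s$ the divisor $D_s\subseteq\mathbb{P}^n_{\overline{\kappa(s)}}$ is, up to an automorphism, a reduced torus-invariant divisor for the standard structure. To pass to the generic fibre I would invoke a constructibility argument: writing $d=\deg D$ (constant over the connected scheme $\Spec R$), the point of the linear system $|\mathcal{O}_{\mathbb{P}^n}(d)|$ determined by $D_s$ lies in the $\PGL_{n+1}$-orbit $Z$ of a reduced torus-invariant divisor of degree $d$, and $Z$ is constructible by Chevalley's theorem. Since the classifying section $\Spec R\to|\mathcal{O}_{\mathbb{P}^n}(d)|$ sends every closed point into $Z$ and $R$ is a finitely generated $\mathbb{Z}$-domain --- so a constructible subset of $\Spec R$ meeting all closed points also contains the generic point --- the divisor $D=D_k$ lies in $Z$ as well, i.e.\ $D$ is a toric divisor for the standard toric structure on $\mathbb{P}^n_k$ up to an automorphism, as desired.
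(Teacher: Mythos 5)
Your outline follows essentially the same strategy as the paper: use Occhetta--Wi\'sniewski to identify $X$ with $\mathbb{P}^n$, spread out to positive characteristic, descend $F$-liftability from the toric cover to the pair $(\mathbb{P}^n, D)$ over the closed fibers, apply Theorem~\ref{Introthm:main}, and lift the conclusion back to characteristic zero. The one genuine gap is exactly the one you flag: the descent of $F$-liftability along $f_s$. The paper handles this by quoting a specific result, namely Theorem 3.11 of \cite{Kawakami-Takamatsu}, which descends $F$-liftability for \emph{pairs} along a finite surjective morphism of degree prime to $p$, provided the boundary on the source includes the components of $f^{-1}(D)_{\mathrm{red}}$ with ramification index prime to $p$ (this is the content of Theorem~\ref{thm:toric image in char p}). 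Mere separability of $f_s$, which is all your spreading-out ensures via generic flatness of $\Omega_{Y_R/\mathbb{P}^n_R}$, is not the right hypothesis: the trace splitting needed for descent requires degree prime to $p$. The fix is cheap --- after enlarging $R$ you can assume $\deg(f_s)$ and all ramification indices along $f_s^{-1}(D_s)$ are prime to $p(s)$, since these are finitely many fixed positive integers --- but this needs to be said, and the actual descent statement needs to be invoked rather than gestured at.

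Your concluding step also differs slightly. Rather than a Chevalley/constructibility argument on the $\PGL_{n+1}$-orbit inside $|\mathcal{O}_{\mathbb{P}^n}(d)|$, the paper observes that Theorem~\ref{thm:key} gives $\deg(D'_s)=1$ for every irreducible component over every closed fiber, hence $\deg(D')=1$ in characteristic zero as well (degrees are constant in flat families); it also records that each $(X_s, D_s)$ is lc, hence so is $(X,D)$, and then reruns the short linear-independence argument from the end of the proof of Theorem~\ref{Introthm:main} to conclude the hyperplanes are in general position. Your constructibility argument would also work once you know $Z$ is nonempty (forcing $d\le n+1$), and a constructible subset of $\Spec R$ containing all closed points contains the generic point since $R$ is Jacobson; but the paper's route is more elementary and avoids having to set up the orbit as a constructible set over $\mathbb{Z}$.
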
 

\subsection{Idea of the proof of Theorem \ref{Introthm:main}} 
For simplicity, assume that $D$ is irreducible. 
Set $X \coloneqq \mathbb{P}^n_k$. 
Firstly, one can show that $H^0(X, \Omega^{[1]}_X(\log D))=0$ for any prime divisor $D$ as a consequence of log Bott vanishing for $(X,D)$ (see Theorem \ref{thm:global section of F-lift pair}).

Now, suppose by contradiction that $\deg D \geq 2$ but $(X, D)$ is Frobenius liftable. An easy calculation shows that in this case 
\[
H^0(L, \Omega^{[1]}_X(\log D)|_L) \neq 0
\]
for a general line $L$ (Lemma \ref{lem:general line}). Pick a section $s$ of $H^0(L, \Omega^{[1]}_X(\log D)|_L)$. To obtain a contradiction, we would like to deform $L$ together with $s$ to construct a global section of $\Omega^{[1]}_X(\log D)$. At first glance, this seems impossible as $s$ will not deform uniquely with $L$, and so there is no way to patch together sections over different lines. To address this, we use a map 
\[
\xi \colon F^*\Omega^{[1]}_X(\log D) \to \Omega^{[1]}_X(\log D)
\]
induced by the Frobenius lift (see (\ref{eq:xi-Frobenius-lift})) and the $\mathbb F_p$-constructible  \'etale subsheaf  $\Omega^{[1]}_X(\log D)^{\xi}$ of $\Omega^{[1]}_X(\log D)$ consisting of sections fixed by $\xi$ (Lemma \ref{lemma:magic-cover}). This subsheaf is introduced in \cite{AWZ}, where it is called the \emph{magic cover}.

We may assume that $s$ is also fixed by $\xi$ (Lemma \ref{lem:p-linear}). As we deform $L$, the section $s$ will deform uniquely as an element of $\Omega^{[1]}_X(\log D)^{\xi}$. Since $\mathbb P^n$ is simply connected, these deformations must patch together to give a non-zero section of $\Omega^{[1]}_X(\log D)^{\xi}$ yielding a contradiction.

\subsection*{Acknowledgements}  
We would like to thank Piotr Achinger, Mircea Musta{\c{t}}{\u{a}}, Maciej Zdanowicz, Mihnea Popa, and Shou Yoshikawa for valuable conversations related to the content of this article. 

\section{Preliminaries}
\subsection{Notation and terminology} \label{ss:notation}
Throughout this paper, we use the following notations:
\begin{itemize}
\item A \textit{variety} is an integral separated scheme of finite type over a field. A curve is a variety of dimension one, while a surface is a variety of dimension two. 

\item Given an integral scheme $X$ of finite type over $k$ and a reduced divisor $D$ on $X$, we say $(X,D)$ is \textit{normal crossing} at $x\in X$ if there exist an open neighborhood $U$ of $x\in X$ and an \'etale map $h\colon U\to \mathbb{A}^{\dim\,X}_k$ such that $D=h^*(\{x_1x_2\cdots x_m=0\})$ for some $m\geq 0$. We say $(X,D)$ is \emph{normal crossing at codimension $k$ points} of $X$ for an integer $k$ if $(X,D)$ is normal crossing at every point of codimension $k$. We simply say that $(X,D)$ is normal crossing if it is normal crossing at every point.

\item Given an integral scheme $X$ of finite type over $k$ and a reduced divisor $D$ on $X$, we say that $(X,D)$ is \textit{simple normal crossing} (\textit{snc} for short) if $(X,D)$ is normal crossing and each irreducible component of $D$ is smooth.

\item Given a normal crossing pair $(X,D)$ one can define a locally free sheaf $\Omega^i_X(\log D)$ which is stable under \'etale base change (see \cite[4.0]{Katz70}). When $(X,D)$ is simple normal crossing with $D = \sum_{i=1}^r D_i$ for distinct prime divisors $D_i$, we have an exact sequence \cite[Section 2.3]{AWZ}:
\begin{equation} \label{eq:ses-diff-snc}
0 \to \Omega^1_X \to \Omega^1_X(\log D) \to \bigoplus_{i=1}^r \sO_{D_i} \to 0.
\end{equation}

\item Given a normal variety $X$ over a perfect field and a reduced divisor $D$ on $X$, we denote $j_{*}(\Omega_{U}^{i}(\log D|_U))$ by $\Omega_X^{[i]}(\log D)$, where $j\colon U\hookrightarrow X$ is the inclusion from the normal crossing locus $U$ of $(X, D)$, which is the largest open subvariety $U$ of $X$ such that $(U,D|_U)$ is normal crossing.

\item Given a smooth variety $X$, we have a group homomorphism 
\begin{equation} \label{eq:dlog}
\mathrm{dlog}\colon \Pic(X)\to H^1(X,\Omega^1_X)
\end{equation}
defined as follows: if $\{f_{ij}\}$ are the transition functions of $L$, then $\mathrm{dlog}(L)=\{df_{ij}/f_{ij}\}\in H^1(X,\Omega^1_X)$ (cf.\ \cite[Exercise III.7.4(c)]{hartshorne2013algebraic}). In what follows, we will  denote $\mathrm{dlog}(L)$ by $c(L)$. Note that if $X$ is of characteristic $p>0$, then we have an induced map $\mathrm{dlog} \colon \Pic(X) \otimes_{\mathbb Z} \mathbb F_p \to H^1(X,\Omega^1_X)$. 

\item For the definition of the singularities of pairs appearing in the MMP (such as \emph{klt, plt, lc}), we refer to \cite[Definition 2.8]{Kol13}. Note that we always assume that the boundary divisor is effective although \cite[Definition 2.8]{Kol13} does not impose this assumption.
\item We say that a normal variety $X$ over a perfect field $k$ of characteristic $p>0$ is \emph{globally $F$-split} if Frobenius $F^* \colon \sO_X \to F_*\sO_X$ splits in the category of sheaves of $\sO_X$-modules. We say that $X$ is $F$-pure if $F^* \colon \sO_{X,x} \to F_*\sO_{X,x}$ splits for localizations $\sO_{X,x}$ at each point $x \in X$.
\item Let $k$ be a field of positive characteristic. We say that a map $\varphi \colon V \to W$ between $k$-vector spaces is \emph{Frobenius-linear} if
\[
\varphi(v_1 + v_2) = \varphi(v_1)+\varphi(v_2) \quad \text{ and } \quad \varphi(av) = a^p\phi(v)
\]
for $v_1,v_2, v \in V$ and $a \in k$. This is equivalent to saying that the induced map $\varphi \colon F^*V \to W$  is linear.
\end{itemize}

We shall use the following fact about Frobenius-linear maps.

\begin{lem} \label{lem:p-linear}
Let $k$ be an algebraically closed field, let $V$ be a $k$-vector space of finite dimension $r$,  and let $\varphi \colon V \to V$ be a Frobenius-linear endomorphism. Assume that $\varphi$ is injective. Then it is bijective and the locus of $\varphi$-fixed points admits an isomorphism:
\[
V^{\varphi} \cong \mathbb F_p^r
\]
of $\mathbb F_p$-vector spaces.
\end{lem}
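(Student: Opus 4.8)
The plan is to prove this by a descending induction on the dimension $r$, reducing the Frobenius-linear problem to linear algebra over $\mathbb{F}_p$. First I would observe that injectivity forces bijectivity for dimension reasons: the induced linear map $\varphi \colon F^*V \to V$ is injective, and since $\dim_k F^*V = \dim_k V = r$, it is an isomorphism, hence $\varphi$ itself is bijective.

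Next I would construct a nonzero fixed vector. Picking any nonzero $v_0 \in V$, the vectors $v_0, \varphi(v_0), \varphi^2(v_0), \dots$ cannot all be linearly independent, so there is a nontrivial $k$-linear relation among finitely many of them; equivalently, $v_0$ satisfies a "$p$-polynomial" equation $\sum a_i \varphi^i(v_0) = 0$ with not all $a_i$ zero. More usefully, restricting $\varphi$ to the finite-dimensional $\varphi$-invariant subspace $W := \mathrm{span}_k(v_0, \varphi(v_0), \dots)$ (which is $\varphi$-stable since $\varphi$ maps it into itself, and finite-dimensional), I reduce to finding a fixed point of a Frobenius-linear bijection on a nonzero space. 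On such a space, consider the additive polynomial map $x \mapsto \varphi(x) - x$; writing things in coordinates using a $k$-basis, solving $\varphi(x) = x$ amounts to a system of $p$-polynomial (additive, separable) equations over the algebraically closed field $k$, which has a nonzero solution because an additive separable polynomial $\sum c_i T^{p^i} - c T$ of positive degree always has $p^{(\text{degree})}$ roots. Concretely, $\varphi - \mathrm{id}$ is an additive, surjective (as a map of varieties/additive groups, since $\varphi$ is bijective and separable — its differential is $-\mathrm{id}$, which is invertible) endomorphism of the $r$-dimensional vector group, so its kernel $V^\varphi$ is a finite group scheme that is étale, hence $V^\varphi$ has exactly $|\ker| = p^r$ points (counted over the algebraic closure), and since $k$ is algebraically closed these are all rational.

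Then I would identify the $\mathbb{F}_p$-structure: $V^\varphi$ is closed under addition (since $\varphi$ is additive) and closed under multiplication by $\mathbb{F}_p \subseteq k$ (since $a^p = a$ for $a \in \mathbb{F}_p$), so $V^\varphi$ is an $\mathbb{F}_p$-vector space; having $p^r$ elements, it must be $\mathbb{F}_p^r$. Alternatively, to make the dimension count cleanest and avoid invoking group-scheme language, I would argue by induction: having found one nonzero fixed vector $e_1$, extend to a $k$-basis, use that $k e_1$ is $\varphi$-stable and that $\varphi$ descends to a Frobenius-linear injection on $V / k e_1$, apply the inductive hypothesis to get $p^{r-1}$ fixed points there, and lift each one to a fixed vector of $V$ — the lifting step again reduces to solving $\varphi(x) - x = (\text{something})$ for $x$ in the affine line direction, i.e. an additive separable polynomial equation over $k$, which is solvable. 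Counting shows $|V^\varphi| = p \cdot p^{r-1} = p^r$, and the $\mathbb{F}_p$-linear structure is as above.

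The main obstacle, and the point requiring the most care, is the surjectivity/solvability input: showing that the additive polynomial map $\varphi - \mathrm{id}$ (or $\varphi$ itself composed appropriately) hits every target value over $k$. This rests on the facts that $k$ is algebraically closed and that the relevant polynomials are \emph{separable} additive polynomials (the "$-\mathrm{id}$" or "$-x$" term guarantees the derivative is a unit, so no repeated roots and the degree equals the number of roots). I would make sure to phrase this either via the theory of additive polynomials over an algebraically closed field or, more slickly, by noting that $\varphi - \mathrm{id} \colon \mathbb{G}_a^r \to \mathbb{G}_a^r$ is an étale isogeny of algebraic groups with kernel of order $p^r$ — essentially an Artin–Schreier–type statement — and then invoking that an étale group scheme over an algebraically closed field is constant. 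Everything else (bijectivity, the $\mathbb{F}_p$-module structure, the induction bookkeeping) is routine.
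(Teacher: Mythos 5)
Your proof is correct and reproduces, in expanded form, the standard argument that the paper simply delegates to the Stacks Project (Tag 0A3L): linearize $\varphi$ to an injective $k$-linear map $F^*V\to V$ and conclude bijectivity by a dimension count, produce a nonzero fixed vector by solving the separable additive (Artin--Schreier type) equation $\varphi(x)=x$ over the algebraically closed field $k$, induct on $\dim V$, and note that $V^\varphi$ is closed under addition and scalar multiplication by $\mathbb F_p$, hence is an $\mathbb F_p$-vector space of the predicted size $p^r$. The étale-isogeny reformulation you offer ($\varphi - \mathrm{id}$ as a separable isogeny of $\mathbb G_a^r$ with constant kernel) is an equivalent packaging of the same idea and is equally acceptable.
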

\begin{proof}
This follows by the proof of \stacksproj{0A3L}.
\end{proof}
\subsection{Frobenius liftable pairs}

We recall the definition of a Frobenius liftable pair. Informally speaking, a pair $(X,D)$ is Frobenius liftable mod $p^2$ if it admits a lift $(\widetilde{X}, \widetilde{D})$ over $W_2(k)$ and there exists an endomorphism $\widetilde{F}$ lifting Frobenius for which each irreducible component of $\widetilde{D}$ is totally invariant.
\begin{defn}\label{def:F-lift}
Let $k$ be an algebraically closed field of characteristic $p>0$.
Let $X$ be a normal variety over $k$ and $D=\sum_{r=1}^n D_r$ a reduced divisor on $X$, where each $D_r$ is an irreducible component.  
We denote the ring of Witt vectors of $k$ of length two by $W_2(k)$. We say that $(X,D)$ is \textit{$F$-liftable}
if there exist 
\begin{itemize}
	\item a flat morphism $\widetilde{X} \to \Spec W_2(k)$  with a closed immersion $i\colon X\hookrightarrow \widetilde{X}$ over the natural map of bases $\Spec k \hookrightarrow \Spec W_2(k)$, 
	 \item closed subschemes $\widetilde{D}_r$ of $\widetilde{X}$ flat over $W_2(k)$ for all $r\in\{1,\ldots,n\}$, and 
	 \item a morphism $\widetilde{F}\colon \widetilde{X}\to \widetilde{X}$ over the Frobenius of the base $W_2(k)$
\end{itemize}
	such that 
\begin{itemize}
	\item the induced morphism $i\times_{W_2(k)}k \colon X\to \widetilde{X}\times_{W_2(k)} k$ is an isomorphism, 
	\item $(i\times_{W_2(k)}k)(D_r)= \widetilde{D}_r\times_{W_2(k)} k$ for all $r\in\{1,\ldots,n\}$, 
    \item $\widetilde{F}\circ i=i \circ F$, where $F$ is the Frobenius on $X$,
    \item $\widetilde{D}|_{\tilde{U}}$ is normal crossing relative to $W_2(k)$,
    where $\tilde{D}=\sum_{r=1}^n \widetilde{D}_r$ and $\tilde{U}\subset \tilde{X}$ is the lift of the normal crossing locus of $(X,D)$, and
    \item $\widetilde{F}^{*}(\widetilde{D}_r|_{\tilde{U}})=p(\widetilde{D}_r|_{\tilde{U}})$ for all $r\in\{1,\ldots,n\}$.
\end{itemize}
We say that $(X,D)$ is \textit{$F$-liftable at a closed point $P\in X$} if there exists an open neighborhood $U$ of $P\in X$ such that $(U,D|_{U})$ is $F$-liftable.
We say that $(X,D)$ is \textit{locally $F$-liftable} if the pair $(X,D)$ is $F$-liftable at every closed point $P\in X$.
\end{defn}

Note that the above definition is a bit more restrictive than the one in \cite[Definition 2.2]{Kawakami-Takamatsu} as the latter does not assume that the normal crossing locus lifts to a normal crossing locus. This additional assumptions is necessary for applying deformation theoretic arguments as in \cite{AWZ}.

\begin{rem}\label{rem:Frobenius liftable singularities}
With notation as in Definition \ref{def:F-lift}, the following statements hold.
\begin{enumerate}
    \item If $(X,D)$ is $F$-liftable (resp.~locally $F$-liftable), then it is globally $F$-split (resp.~$F$-pure) (the local case is \cite[Corollary 3.7]{Kawakami-Takamatsu}; the proof of the global case is completely analogous).
    \item If $(X,D)$ is $F$-pure and $K_X+D$ is $\Q$-Cartier, then $(X,D)$ is lc \cite[Theorem 3.3]{Hara-Watanabe} and
    it is normal crossing at codimension $\leq 2$ points of $X$ \cite[Corollary 2.32]{Kol13}. 
\end{enumerate}
\end{rem}

Finally, suppose that $(X,D)$ is an $F$-liftable pair. Then there exists a morphism
\begin{equation} \label{eq:xi-Frobenius-lift}
\xi \colon F^*\Omega^{[1]}_X(\log D) \to \Omega^{[1]}_X(\log D).
\end{equation}
which is an isomorphism at the generic point of $X$. Indeed, such a morphism exists on a normal crossing locus (see \cite[(3.1)]{AWZ2} or \cite[\S 4.2]{DI87}) from which we can reflexify it to the whole $X$.

Next, we look at the (\'etale) subsheaf $(\Omega^1_X(\log D))^{\xi}$ of $\xi$-invariant forms.

\begin{lem}[{cf.\ Lemma \ref{lem:p-linear}}] \label{lemma:magic-cover}
With notation as in Definition \ref{def:F-lift}, suppose that $(X,D)$ is a normal crossing pair. Then the \'etale subsheaf $(\Omega^1_X(\log D))^{\xi}$ of $\Omega^1_X(\log D)$ is a constructible sheaf of $\mathbb F_p$-vector spaces. 

Moreover, over the open dense subset $U \subseteq X$ where $\xi$ is an isomorphism, $(\Omega^1_X(\log D))^{\xi}$ is an $\mathbb F_p$-locally constant sheaf of rank $p^{\dim X}$.
\end{lem}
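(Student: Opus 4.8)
The plan is to treat $\xi$ as a Frobenius-linear endomorphism of the locally free sheaf $V \coloneqq \Omega^1_X(\log D)$ and to run the standard Artin--Schreier dictionary between Frobenius-linear maps and \'etale $\mathbb F_p$-sheaves; the only delicate point will be the behaviour of $V^{\xi}$ along the locus where $\xi$ fails to be an isomorphism.

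First I would record the formal structure. Composing the (Frobenius-linear) adjunction unit $V \to F^*V$ with $\xi$ gives a Frobenius-linear self-map of $V$, still written $\xi$; it is additive and satisfies $\xi(c\omega) = c^{p}\xi(\omega) = c\xi(\omega)$ for $c \in \mathbb F_p$, so $V^{\xi} = \mathrm{Eq}(\xi,\id)$ is an \'etale subsheaf of $\mathbb F_p$-vector spaces of (the \'etale sheaf attached to) $V$, and it is represented by an affine $X$-scheme $\pi \colon Z \to X$ which in any local frame of $V$ is cut out inside $\AA^n_X$ by the equations expressing $\xi(v) = v$. At a geometric point $\bar x$ the stalk is $V_{\bar x}^{\xi_{\bar x}}$: if $v$ is $\xi$-fixed then $v = \xi^m(v)$ for all $m$, hence $v$ lies in the stable image $W \coloneqq \bigcap_{m \ge 0}\xi^m(V_{\bar x})$, a $k(\bar x)$-subspace (here one uses that $k$ is perfect) which stabilizes since $V_{\bar x}$ is finite-dimensional; on $W$ the map $\xi$ restricts to a \emph{bijective} Frobenius-linear endomorphism, so Lemma \ref{lem:p-linear} gives $V_{\bar x}^{\xi_{\bar x}} \cong \mathbb F_p^{\dim_k W}$. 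In particular every stalk is finite, so $\pi$ is quasi-finite.

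Next, over the dense open $U \subseteq X$ on which $\xi \colon F^*V \to V$ is an isomorphism of $\sO_X$-modules (this contains the generic point, since $\xi$ is generically an isomorphism), I would compute $Z$ in a local frame $e_1,\dots,e_n$ of $V$. Writing $\xi(e_i) = \sum_j c_{ji}e_j$ with $C = (c_{ij}) \in \mathrm{GL}_n$, the equation $\xi\big(\sum_i a_ie_i\big) = \sum_i a_ie_i$ becomes $a_i^{p} = \sum_j d_{ij}a_j$ with $(d_{ij}) = C^{-1}$, so $Z$ is locally $\Spec\,\sO_U[a_1,\dots,a_n]/(a_i^{p} - \sum_j d_{ij}a_j)$. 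This algebra is finite free of rank $p^{n}$ over $\sO_U$, and it is \'etale because the Jacobian of the defining equations is $(-1)^n\det(C^{-1}) \ne 0$. Hence $Z|_U \to U$ is finite \'etale of degree $p^{\dim X}$, so $V^{\xi}|_U$ is an $\mathbb F_p$-locally constant sheaf, and applying Lemma \ref{lem:p-linear} fiberwise to the bijection $\xi_{\bar x}$ of the $n$-dimensional space $V_{\bar x}$ identifies each stalk with $\mathbb F_p^{\dim X}$; this gives the asserted rank.

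Finally, constructibility on all of $X$ would follow by Noetherian induction once one knows that for \emph{any} such pair $(V,\xi)$ on a reduced Noetherian scheme there is a dense open over which $V^{\xi}$ is finite locally constant: one then replaces $X$ by the reduced closed complement of that open and iterates. To produce such an open one stratifies by the constructible function $\bar x \mapsto \dim_k W$ (the stable rank of $\xi$), passes to a stratum where it is constant, and arranges that the stable image $\bigcap_m \xi^m(V) = \xi^{m_0}(V)$ stabilizes to a genuine subbundle $V'$ on which $\xi$ restricts to an isomorphism, after which the computation over $U$ above applies to $(V',\xi)$. The step I expect to be the main obstacle is exactly this last one --- showing that, after possibly refining the stratification (controlling also the ranks of the $V/\xi^m(V)$), the stable image becomes a subbundle with $\xi$ an isomorphism on it; this is the technical heart, and over $U$ itself everything reduces to the clean Artin--Schreier calculation above. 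Alternatively, one may simply cite the magic-cover construction of \cite{AWZ}, whose argument carries over verbatim to the logarithmic setting.
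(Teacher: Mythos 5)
Your reconstruction follows the same Artin--Schreier ``magic cover'' argument as \cite[Lemma~3.2.7]{AWZ}, which is exactly what the paper's proof appeals to (the paper simply cites that lemma with $\Omega^1_X$ replaced by $\Omega^1_X(\log D)$). The local \'etale computation over $U$, the stalk analysis via Lemma~\ref{lem:p-linear}, and the Noetherian-induction outline for constructibility all match the cited source, and you rightly observe that one may dispense with the details by citing \cite{AWZ} directly.
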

\begin{proof}
The proof is completely analogous to that in \cite[Lemma 3.2.7]{AWZ} with $\Omega^1_X$ replaced by $\Omega^1_X(\log D)$.
\end{proof}

\subsection{Toric images}
Last, we recall the following theorem of Occhetta-Wi\'{s}niewski and its positive characteristic variant from \cite[Theorem 3.9]{Kawakami-Totaro}.

\begin{thm}[\textup{\cite[Theorem 1.1]{Occhetta-Wisniewski} (cf.~\cite[Theorem 3.9]{Kawakami-Totaro})}]\label{thm:Occhetta-Wisniewski}
    Let $X$ be a smooth projective variety of Picard
    number 1 over an algebraically closed field.
    Let $Y$ be a proper toric variety. If there exists
    a separable surjective morphism $Y\to X$,
    then $X$ is isomorphic to projective space.
\end{thm}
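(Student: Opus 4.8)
The plan is to reduce the theorem to the statement that a minimal dominating family of rational curves on $X$ has maximal anticanonical degree, and then to quote the characterisation of projective space in terms of such families. First I would replace $Y$ by a smooth projective toric resolution of singularities — obtained by subdividing the defining fan — which still maps separably and surjectively onto $X$, since a birational morphism induces a degree-one, hence separable, extension of function fields; so we may assume $Y$ is a smooth projective toric variety, of dimension $N\ge n$. Because $X$ is the dominant image of the rational, hence uniruled, variety $Y$, it is uniruled; being smooth, projective, and of Picard number one, $X$ is therefore Fano. By the Cho--Miyaoka--Shepherd-Barron characterisation of projective space (resp.\ its positive-characteristic analogue), it now suffices to produce a minimal dominating family $\mathcal K$ of rational curves on $X$ whose variety of minimal rational tangents at a general point is all of $\mathbb P(T_xX)$, equivalently with $-K_X\cdot\mathcal K=n+1$.

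The second step is to use the toric structure of $Y$ to produce an abundance of rational curves on $X$. Fixing a birational map $\mathbb P^N_k\dashrightarrow Y$ and composing with $f$ yields a dominant, separable rational map $\psi\colon\mathbb P^N_k\dashrightarrow X$; choose a general point $p$, so that $\psi$ is defined near $p$ and the differential $d\psi_p\colon T_p\mathbb P^N_k\to T_xX$ is surjective, with $x\coloneqq\psi(p)$. For any line $\ell\subseteq\mathbb P^N_k$ through $p$ whose tangent direction at $p$ does not lie in $\ker d\psi_p$, the image $\psi(\ell)$ is a rational curve through $x$ with tangent direction at $x$ equal to the image under $d\psi_p$ of that of $\ell$; as $\ell$ ranges over all lines through $p$, these tangent directions exhaust $\mathbb P(T_xX)$. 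Hence through a general point of $X$ there pass rational curves tangent to every direction. (This alone does not characterise $\mathbb P^n_k$: the smooth quadric of dimension $\ge 3$ enjoys the same property; the toric structure must therefore re-enter to control degrees.)

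The third, decisive step is to refine this to a \emph{minimal} family of the right anticanonical degree. Here I would invoke the generalised Euler sequence of the smooth toric variety $Y$,
\[
0\longrightarrow\sO_Y^{\oplus(d-N)}\longrightarrow\bigoplus_{i=1}^{d}\sO_Y(D_i)\longrightarrow T_Y\longrightarrow 0,
\]
with $D_1,\dots,D_d$ the torus-invariant prime divisors, together with the generic surjection $df\colon T_Y\to f^{*}T_X$: combined, they constrain the splitting type of $f^{*}T_X$ along rational curves. Applying bend-and-break while holding the prescribed $1$-jet at $x$ fixed, and using $\rho(X)=1$ to compare numerical classes, one aims to produce a minimal dominating family $\mathcal K$ on $X$ with full variety of minimal rational tangents at a general point, i.e.\ $-K_X\cdot\mathcal K=n+1$. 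By the characterisation quoted in the first step, $X\cong\mathbb P^n_k$.

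The hard part is exactly the third step: upgrading the rational curves of the second step, which are a priori far from minimal, to a \emph{minimal} dominating family of anticanonical degree exactly $n+1$, and doing so while retaining enough control of tangent directions for bend-and-break to preserve fullness of the variety of minimal rational tangents. In positive characteristic one must additionally handle the possible inseparability of the tangent map and the failure of Kodaira-type vanishing theorems. These are precisely the points worked out in \cite{Occhetta-Wisniewski} in characteristic zero and in \cite[Theorem~3.9]{Kawakami-Totaro} in positive characteristic.
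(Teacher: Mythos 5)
The paper quotes this theorem as a known result from \cite{Occhetta-Wisniewski} (characteristic zero) and \cite[Theorem~3.9]{Kawakami-Totaro} (positive characteristic), so there is no in-paper proof to compare against; I will therefore judge your sketch on its own. The setup is reasonable: replacing $Y$ by a smooth projective toric resolution is legitimate (a complete fan has a smooth projective refinement, and a birational toric modification does not affect separability), $X$ is then Fano of Picard number one, and the target is a Cho--Miyaoka--Shepherd-Barron/Kebekus-type characterisation via a minimal dominating family of anticanonical degree $n+1$. But the decisive third step is a genuine gap, as you yourself flag, and the mechanism you propose there does not do the job. Bend-and-break with a fixed point gives only the upper bound $-K_X\cdot\mathcal K\le n+1$ for a minimal family; it does not produce the lower bound $-K_X\cdot\mathcal K\ge n+1$ you need, and it does not preserve a prescribed tangent direction, so one cannot ``hold a $1$-jet fixed'' in the way described. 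The curves from your second step, which sweep out all tangent directions, a priori belong to a non-minimal family, and --- as you observe --- merely having some dominating family with full tangent variety is consistent with $X$ being a quadric. The whole content of the theorem is to show that the \emph{minimal} family already has degree $n+1$, and that is exactly what the sketch leaves unproved. In \cite{Occhetta-Wisniewski} this is achieved by restricting the toric Euler sequence to a curve on $Y$ lying over a minimal rational curve $C\subset X$, comparing with the generically surjective tangent map $T_Y\to f^*T_X$, and using $\rho(X)=1$ to control the numerical classes of the torus-invariant divisors $D_i$; the positive-characteristic version in \cite[Theorem~3.9]{Kawakami-Totaro} additionally has to contend with failures of generic smoothness and of the usual vanishing statements, and the VMRT characterisation of $\mathbb P^n$ you invoke is a characteristic-zero tool that is not applied there. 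So the outline points at the right target, but the step it concedes is open is precisely where the proof lives, and the bend-and-break mechanism you gesture toward would not close it.
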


\section{Proof of main theorems}

\begin{setting}\label{setting:global 1-forms}
     Let $k$ be an algebraically closed field.
     Let $X$ be a smooth variety over $k$ of dimension $n\geq 2$, and let $D$ be a prime divisor on $X$ such that $(X,D)$ is normal crossing at points of codimension $\leq 2$.
     Let $j\colon U\hookrightarrow X$ be the inclusion of the normal crossing locus of $(X,D)$, and $Z\coloneqq X\setminus U$.  
 \end{setting}

\begin{lem}[{cf.\ \cite[equation (2.6) and Corollary 2.8]{dolgachev2007logarithmic}}]\label{lem:exact sequence}
    In the situation of Setting \ref{setting:global 1-forms},
    we have the short exact sequence
    \begin{equation}\label{residue}
    0\to \Omega^1_X\to \Omega^{[1]}_X(\log D)\to \nu_*\mathcal{O}_{D'}\to 0,
\end{equation}
    where $\nu\colon D'\to D$ is the normalization of $D$.
\end{lem}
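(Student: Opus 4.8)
The plan is to construct the sequence on the normal crossing locus $U$ first, where $(U, D|_U)$ is a normal crossing pair, and then push forward along $j$. On $U$, both $\Omega^1_X|_U = \Omega^1_U$ and $\Omega^{[1]}_X(\log D)|_U = \Omega^1_U(\log D|_U)$ are locally free, and the standard residue sequence for a normal crossing divisor gives
\[
0 \to \Omega^1_U \to \Omega^1_U(\log D|_U) \xrightarrow{\ \mathrm{res}\ } \nu'_*\sO_{D'|_U} \to 0,
\]
where $\nu' \colon D'|_U \to D|_U$ is the normalization — and since $(U, D|_U)$ is normal crossing, $D'|_U \to D|_U$ is precisely the étale-local disjoint union of the branches, so $\nu'_*\sO_{D'|_U}$ agrees with the sheaf $\bigoplus \sO_{D_i}$ appearing in \eqref{eq:ses-diff-snc} after an étale base change; the point is only that the residue map is well-defined étale-locally and glues. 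This is essentially \cite[equation (2.6)]{dolgachev2007logarithmic}.

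Next I would apply $j_*$ to this short exact sequence. The term $j_*\Omega^1_U$ equals $\Omega^1_X$ because $X$ is smooth and $Z = X \setminus U$ has codimension $\geq 2$ in $X$ (this is where the hypothesis that $(X,D)$ is normal crossing at codimension $\leq 2$ points is used — it forces $\codim_X Z \geq 3$, in particular $\geq 2$, so reflexive sheaves on $X$ are recovered from their restriction to $U$, and $\Omega^1_X$ is already reflexive). By definition $j_*\Omega^1_U(\log D|_U) = \Omega^{[1]}_X(\log D)$. Finally, $j_*(\nu'_*\sO_{D'|_U}) = \nu_*\sO_{D'}$: indeed $\nu_* j'_* = j_* \nu'_*$ for the compatible open immersions, and $j'_*\sO_{D'|_U} = \sO_{D'}$ since $D'$ is normal and $D' \setminus (D'|_U) = \nu^{-1}(Z)$ has codimension $\geq 2$ in $D'$ (as $Z$ has codimension $\geq 2$ in $D$, using again that $D$ is a reduced hypersurface in a smooth variety, hence equidimensional, and normalization is finite). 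Pushforward is left exact, so we immediately get exactness of
\[
0 \to \Omega^1_X \to \Omega^{[1]}_X(\log D) \to \nu_*\sO_{D'},
\]
and the remaining task is surjectivity on the right, i.e. the vanishing of $R^1j_*\Omega^1_U$ as it maps into this sequence — or rather, surjectivity of $\Omega^{[1]}_X(\log D) \to \nu_*\sO_{D'}$.

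The main obstacle is exactly this surjectivity: left-exactness of $j_*$ only yields that the cokernel injects into $R^1 j_* \Omega^1_U$, so I need an argument that the residue map stays surjective after pushforward. I would handle this locally: surjectivity can be checked on stalks, and away from $Z$ it is the classical residue sequence, so the only issue is at points $z \in Z$. There, since $Z$ has codimension $\geq 3$ in $X$ and $\geq 2$ in $D$, a section of $\nu_*\sO_{D'}$ near $z$ is determined by its restriction to $D'|_U$ (normality of $D'$, Hartogs), and one lifts it étale-locally on $U$ via $\mathrm{res}$ and then takes $j_*$ of the lift, which is a section of $\Omega^{[1]}_X(\log D)$ mapping to it; alternatively, cite \cite[Corollary 2.8]{dolgachev2007logarithmic} directly, which proves precisely this statement in the required generality. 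I would also double-check that $\nu_*\sO_{D'}$ is the correct target rather than $\sO_D$ itself — these differ exactly when $D$ is non-normal, which is permitted here since $(X,D)$ need only be normal crossing in codimension $\leq 2$, so the normalization $\nu$ genuinely separates analytic branches of $D$ and $\nu_*\sO_{D'}$ is the right receptacle for the residue.
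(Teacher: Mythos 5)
Your construction on $U$ and the reduction via $j_*$ match the paper, but the surjectivity step has a real gap. You correctly observe that the only issue is surjectivity of $\Omega^{[1]}_X(\log D)\to \nu_*\sO_{D'}$ at stalks over $z\in Z$, and that left-exactness of $j_*$ only tells you the cokernel maps into $R^1j_*\Omega^1_U$. But the argument you then offer --- ``one lifts it \'etale-locally on $U$ via $\mathrm{res}$ and then takes $j_*$ of the lift'' --- does not produce a section of $\Omega^{[1]}_X(\log D)$ near $z$: lifting \'etale-locally gives sections on small neighborhoods inside $U$, and these local lifts disagree by elements of $\Omega^1_U$; gluing them to a single section over $V\cap U$ (for a neighborhood $V$ of $z$, so that $j_*$ applies) is exactly the $H^1$-vanishing you flagged as the obstacle, so you have restated the problem rather than solved it. Deferring to Dolgachev's Corollary 2.8 is not a substitute either: the paper's citation is only a ``cf.'', Dolgachev works with hyperplane arrangements over $\mathbb C$, and in any case the present setting (arbitrary prime divisors in positive characteristic, normal crossing only in codimension $\le 2$) would need to be checked to fall under his statement.

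The paper closes the gap with a depth argument that you are missing. Let $\mathcal C\subseteq\nu_*\sO_{D'}$ be the image of the restricted map. Both $\mathcal C$ and $\nu_*\sO_{D'}$ are torsion-free $\sO_D$-modules agreeing outside $Z$, which has codimension $\ge 2$ in $D$; so it suffices to show $\mathcal C$ is $S_2$ (while $\nu_*\sO_{D'}$ is $S_2$ since $\sO_{D'}$ is $S_2$ and $\nu$ is finite). From $0\to\Omega^1_X\to\Omega^{[1]}_X(\log D)\to\mathcal C\to 0$, with $\Omega^1_X$ locally free (hence $S_3$) and $\Omega^{[1]}_X(\log D)$ reflexive (hence $S_2$), the depth lemma \cite[Lemma 2.60]{Kol13} gives that $\mathcal C$ is $S_2$, hence $\mathcal C=\nu_*\sO_{D'}$. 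A closely related route that would also rescue your approach is to prove directly that $(R^1j_*\Omega^1_U)_z=0$: since $\Omega^1_X$ is Cohen--Macaulay and $\codim_X Z\ge 3$, one has $\operatorname{depth}_Z\Omega^1_X\ge 3$, so $H^2_Z(\Omega^1_X)=0$, which yields the required vanishing of $R^1j_*\Omega^1_U$ along $Z$. Either way, the point you need --- and did not supply --- is a depth/local-cohomology input, not a local lifting of sections.
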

\noindent Note that $\nu_* \sO_{D'}$ is $S_2$ as $\sO_{D'}$ is $S_2$ and $\nu$ is finite ({cf.\ \cite[Proposition 5.4]{KM98}}). 
\begin{proof}
First, we construct the map 
\[
{\rm res}|_U \colon \Omega^{[1]}_X(\log D)|_U \to \nu_*\mathcal{O}_{D'}|_U.
\]
To this end, consider a log resolution of singularities $\pi \colon \widetilde{U} \to U$ of the normal crossing pair $(U,D|_U)$ which exists, for example, by \cite{conrad2016normal}. Let $E = \pi^{-1}(D|_U)^{\rm red}$ and let $E_1, \ldots, E_r$ be the irreducible components of $E$ with $E_1$ being the normalisation of $D|
_U$. Then (\ref{eq:ses-diff-snc}) provides us with a map
\[
\Omega^1_{\widetilde U}(\log E) \to \bigoplus_{1 \leq i \leq r} \sO_{E_i},
\]
and thus with the induced projection $\Omega^1_{\widetilde U}(\log E) \to \sO_{E_1}$. By pushing forward by $\pi$, we then get a map:
\[
{\rm res}|_U \colon \Omega^{1}_U(\log D|_U) = \pi_*\Omega^1_{\widetilde U}(\log E) \to \nu_*\sO_{D'}|_U
\]
as required, where the first equality follows by the existence of a natural pullback map $\Omega^{1}_U(\log D|_U) \to \pi_*\Omega^1_{\widetilde U}(\log E)$. 

Now, consider the sequence:
\[
0\to \Omega^1_U\to \Omega^{[1]}_U(\log D|_U)\to \nu_*\mathcal{O}_{D'}|_U \to 0.
\]
We claim that it is exact. As this can be checked \'etale locally, we may assume that $(U,D|_U)$ is simple normal crossing, in which case the sequence is exact by (\ref{eq:ses-diff-snc}).

By assumptions, $Z \subseteq D$ is of codimension at least $2$ in $D$. Thus, by applying $j_*$ for $j \colon U \to X$ to the above diagram, we get an exact sequence
\[
0\to \Omega^1_X\to \Omega^{[1]}_X(\log D)\to \nu_*\mathcal{O}_{D'}.
\]
Let us denote the image of the rightmost map by $\mathcal C$. To conclude the proof of the lemma, we need to show that $\mathcal C = \nu_*\mathcal{O}_{D'}$. Since both sheaves {are torsion-free $\mathcal{O}_D$-modules and they} agree with each other outside $Z \subseteq D$ which is of codimension at least $2$ in $D$, it is enough to prove that $\mathcal C$ is $S_2$.

Since $X$ is smooth, $\Omega^1_X$ is locally-free. In particular, it is $S_3$. Moreover, $\Omega^{[1]}_X(\log D)$ is automatically $S_2$ as it is reflexive. Hence $\mathcal{C}$ is $S_2$ by depth lemma \cite[Lemma 2.60]{Kol13}. This concludes the proof of the lemma. 
\end{proof}

Although the following two results are not needed, we include them below as they provide a better understanding of the overall picture (cf.\ the proof of Theorem \ref{thm:global section of F-lift pair}).
\begin{lem}\label{boundary}
    In the situation of Setting \ref{setting:global 1-forms},
    let
    \[
    \delta_X\colon H^0(D', \mathcal{O}_{D'})(=H^0(D, \nu_*\mathcal{O}_{D'})) \to H^1(X,\Omega^1_X)
    \]
    be the connecting map induced by the short exact sequence
    \eqref{residue}.
    Then we have $\delta_X(1)=c(\mathcal{O}_X(D))$.
\end{lem}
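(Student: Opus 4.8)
The plan is to reduce everything to the classical identification of the Chern class map with the connecting homomorphism coming from the exponential-type sequence associated to a line bundle, and then to check that the residue sequence \eqref{residue} restricts, over the normal crossing locus $U$, to the standard logarithmic sequence for which this identification is already known. Concretely, I would first work over $U = X \setminus Z$, where $(U, D|_U)$ is normal crossing and we have the exact sequence
\[
0 \to \Omega^1_U \to \Omega^1_U(\log D|_U) \to \nu_*\sO_{D'}|_U \to 0
\]
obtained \'etale-locally from \eqref{eq:ses-diff-snc}. Over $U$ this is the usual residue sequence, whose connecting map $\delta_U \colon H^0(D'|_U, \sO_{D'}) \to H^1(U, \Omega^1_U)$ is classically computed: in local trivializations where $D$ is cut out by a coordinate, the extension class is represented by $\{d f_{ij}/f_{ij}\}$ in $\check{C}ech$ cohomology, where $f_{ij}$ are the transition functions of $\sO_X(D)$, so $\delta_U(1) = c(\sO_U(D|_U))$. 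This is essentially the content of \cite[equation (2.6) and Corollary 2.8]{dolgachev2007logarithmic}, so I would cite that for the computation over $U$ rather than redo the $\check{C}ech$ bookkeeping.

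The next step is to transport this identity from $U$ to $X$. Since $X$ is smooth and $Z$ has codimension at least $2$ in $X$ (it is codimension $\ge 2$ in $D$, hence $\ge 3$ in $X$, but codimension $\ge 2$ already suffices), restriction gives an isomorphism $H^1(X, \Omega^1_X) \xrightarrow{\sim} H^1(U, \Omega^1_X|_U) = H^1(U, \Omega^1_U)$; this follows from the fact that $\Omega^1_X$ is locally free, hence $S_2$ (indeed $S_3$), so $j_*\Omega^1_U = \Omega^1_X$ and $R^1 j_* \Omega^1_U$ is supported on $Z$ in a way that does not affect $H^1$, together with the local cohomology vanishing $H^i_Z(X, \Omega^1_X) = 0$ for $i \le \mathrm{depth}$. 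Under this isomorphism $c(\sO_X(D))$ restricts to $c(\sO_U(D|_U))$ by functoriality of the $\mathrm{dlog}$ map. Likewise, $H^0(D', \sO_{D'}) = H^0(D'|_U, \sO_{D'|_U})$ because $\sO_{D'}$ is $S_2$ (as noted right after the statement of Lemma \ref{lem:exact sequence}) and $D' \setminus (D'|_U)$ has codimension $\ge 2$ in $D'$. Finally, the connecting maps are compatible with restriction: the short exact sequence on $X$ restricts to the one on $U$, so the induced map on cohomology commutes with $j^*$, giving $\delta_X(1)|_U = \delta_U(1) = c(\sO_U(D|_U)) = c(\sO_X(D))|_U$, and since restriction to $U$ is injective (in fact bijective) on $H^1(X, \Omega^1_X)$, we conclude $\delta_X(1) = c(\sO_X(D))$.

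I expect the only genuinely delicate point to be the careful justification that passing to $U$ loses no cohomological information in degree $1$ and no global-section information on $D'$ — i.e.\ the $S_2$/depth arguments that make the restriction maps $H^0(D',\sO_{D'}) \to H^0(D'|_U,\sO_{D'|_U})$ and $H^1(X,\Omega^1_X) \to H^1(U,\Omega^1_U)$ isomorphisms, and the compatibility of the two connecting homomorphisms under these restrictions. The identification $\delta_U(1) = c(\sO_U(D|_U))$ itself is standard and can be quoted. One subtlety worth a sentence: over $U$ the sheaf $\nu_*\sO_{D'}|_U$ need not equal $\sO_{D|_U}$ unless $D|_U$ is already normal crossing in the strong sense, but on the snc model $\widetilde U$ the component $E_1$ is literally the normalization of $D|_U$, so the residue onto $\sO_{E_1}$ followed by pushforward produces exactly $\nu_*\sO_{D'}|_U$, and the transition-function computation is carried out on $\widetilde U$ where it is the textbook case. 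With these points addressed the proof is complete.
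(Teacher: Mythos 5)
Your strategy is the same in spirit as the paper's---restrict to a large open set where the residue identity is classical, and use injectivity of restriction on $H^1(X,\Omega^1_X)$---but you choose a different open set, and this makes your argument heavier than necessary. The paper restricts to $V := X \setminus S$ where $S$ is the singular locus of $D$ (so $\mathrm{codim}_X S \geq 2$ since $D$ is prime). Over $V$ the divisor $D|_V$ is \emph{smooth}, the normalization $\nu$ is an isomorphism, and the sequence \eqref{residue} becomes the textbook residue sequence $0 \to \Omega^1_V \to \Omega^1_V(\log D|_V) \to \sO_{D|_V} \to 0$; the identity $\delta_V(1) = c(\sO_V(D|_V))$ is then literally the definition of the dlog-Chern class, no citation needed. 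You instead restrict to the normal crossing locus $U$, where $D|_U$ may still be singular (self-crossings are allowed since $D$ is prime but only normal crossing, not snc). This forces you to verify $\delta_U(1) = c(\sO_U(D|_U))$ for a non-smooth $D|_U$, which you propose to do either by citing Dolgachev or by passing to an snc resolution $\widetilde U \to U$ and then pushing the computation down. The latter step, relating the connecting map of the pushed-forward sequence on $U$ to the one on $\widetilde U$, is exactly the kind of bookkeeping the paper avoids by choosing $V$ instead; you flag this as the delicate point and you're right that it needs care. Since $V \supseteq U$ anyway (the normal crossing locus is contained in the smooth locus of $D$ union the locus where $D$ has honest crossings, but the smooth locus of $D$ alone is already big enough), restricting to $V$ is strictly simpler and costs nothing. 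One more small remark: you claim $H^1(X,\Omega^1_X) \to H^1(U,\Omega^1_U)$ is bijective, and while this is true in Setting \ref{setting:global 1-forms} because $\mathrm{codim}_X Z \geq 3$, only injectivity is used, and indeed the paper's $S$ only has $\mathrm{codim}_X S \geq 2$, which gives injectivity but not bijectivity---so if you wanted to switch to $V$, you should drop the ``(in fact bijective)'' parenthetical.
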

\begin{proof}
This result is standard and follows by definition of $c(\sO_X(D))$ when $D$ is smooth (see (\ref{eq:dlog})). In general, let $S \subseteq D$ be the singular locus of $D$ and $V := X \backslash S$. Since ${\rm codim}_X(S) \geq 2$ we have that $H^1_{S}(\Omega^1_X)=0$ (see \cite[Proposition 1.2.10 (a) and (e)]{BH93}), and so the natural restriction map
    $r\colon H^1(X,\Omega^1_X)\to H^1({V},\Omega^1_{V})$ is injective.
    Now, we have the following commutative diagram
    \[
    \begin{tikzcd}
H^0(D',\mathcal{O}_{D'}) \arrow[r, "\delta_X"] \arrow[d]
& H^1(X, \Omega^1_X) \arrow[d, hook, "r" ] \\
 H^0(D'_{V},\mathcal{O}_{D'_{V}}) \arrow[r,"\delta_V" ]
& |[, rotate=0]| H^1({V}, \Omega^1_{V}),
\end{tikzcd}
\]
where {$D'_V\coloneqq \nu^{-1}(D\cap V)$} and ${\delta_V}$ is the connecting map.
Since {$D|_V$ is smooth}, we have $\delta_V(1)=c(\sO_V(D|_V))$.
Therefore, by the injectivity of $r$, we conclude.
\end{proof}

\begin{prop}\label{h00}
    In the situation of Setting \ref{setting:global 1-forms}, assume that $X$ is projective.
    If $c(\mathcal{O}_X(D))\neq 0$, then $\dim_{k}H^0(X,\Omega^{[1]}_X (\log D))=\dim_{k}H^0(X,\Omega^1_X)$.
\end{prop}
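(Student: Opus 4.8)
The plan is to extract the statement from the long exact cohomology sequence of \eqref{residue}, using Lemma \ref{boundary} to identify the connecting map. Concretely, applying $H^0$ and $H^1$ to the short exact sequence $0 \to \Omega^1_X \to \Omega^{[1]}_X(\log D) \to \nu_*\sO_{D'} \to 0$ gives
\[
0 \to H^0(X, \Omega^1_X) \to H^0(X, \Omega^{[1]}_X(\log D)) \to H^0(D', \sO_{D'}) \xrightarrow{\ \delta_X\ } H^1(X, \Omega^1_X).
\]
So $\dim_k H^0(X, \Omega^{[1]}_X(\log D)) = \dim_k H^0(X, \Omega^1_X) + \dim_k \ker(\delta_X)$, and the claim reduces to showing $\ker(\delta_X) = 0$ under the hypothesis $c(\sO_X(D)) \neq 0$.

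The key point is that $H^0(D', \sO_{D'})$ is one-dimensional: since $X$ is projective and $D$ is a prime (hence irreducible) divisor, $D'$ is an irreducible projective variety over the algebraically closed field $k$, so $H^0(D', \sO_{D'}) = k$, spanned by the constant function $1$. By Lemma \ref{boundary}, $\delta_X(1) = c(\sO_X(D))$, which is nonzero by assumption; hence $\delta_X$ is injective on the one-dimensional space $H^0(D', \sO_{D'})$, i.e.\ $\ker(\delta_X) = 0$. Combining with the displayed exact sequence yields $\dim_k H^0(X, \Omega^{[1]}_X(\log D)) = \dim_k H^0(X, \Omega^1_X)$, as desired.

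I do not anticipate a serious obstacle here: the argument is a direct application of Lemma \ref{boundary} together with the irreducibility of $D$ (which forces $h^0(D', \sO_{D'}) = 1$). The only points that need a word of care are (i) confirming that $D'$ is connected and reduced so that $H^0(D', \sO_{D'}) = k$ — this follows because $D$ is prime and $\nu$ is the normalization, so $D'$ is an integral projective $k$-scheme — and (ii) noting that the maps in the cohomology sequence are $k$-linear so that dimension counting is valid. Everything else is formal.
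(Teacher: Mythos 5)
Your proof is correct and follows the same route as the paper: take the long exact cohomology sequence of \eqref{residue}, use Lemma \ref{boundary} to identify $\delta_X(1)=c(\sO_X(D))$, and conclude from $c(\sO_X(D))\neq 0$ that $\delta_X$ is injective on the one-dimensional space $H^0(D',\sO_{D'})\cong k$. Your extra remarks (irreducibility of $D$ forcing $h^0(D',\sO_{D'})=1$, $k$-linearity of the maps) are just the details the paper leaves implicit.
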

\begin{proof}
By \eqref{residue}, we have an exact sequence
   \[
   0\to H^0(X,\Omega^1_X) \to H^0(X,\Omega^{[1]}_X(\log D)) \to  H^0(D', \mathcal{O}_{D'})(\cong k)\xrightarrow{\delta_X} H^1(X,\Omega^1_X).
   \]
   Since $\delta_X(1)=c(\mathcal{O}_X(D))\neq 0$, the map $\delta_X$ is injective, and from this we can conclude.
\end{proof}

\subsection{Frobenius liftability}

The following proposition can be seen as an analogue of Totaro's result \cite[Theorem 3.1]{Totaro-endo}, which states that if $X$ admits an int-amplified endomorphism and $D$ is a totally invariant divisor, then the pair $(X, D)$ satisfies log Bott vanishing.
\begin{prop}\label{prop:log Bott}
Let $k$ be an algebraically closed field of characteristic $p>0$.
Let $(X,D)$ be a pair of a normal projective variety over $k$ of dimension $n$ and a reduced divisor $D$ such that $(X,D)$ is $F$-liftable.
Then 
\[
H^j(X,\Omega^{[i]}_X(\log D)(-D+A))=0
\]
for all ample $\Q$-Cartier $\Z$-divisors $A$ on $X$ and all $i\geq 0$ and $j>0$.
\end{prop}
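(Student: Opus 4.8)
The plan is to deduce the vanishing from the two structural facts we have available: the existence of the Frobenius-lift map $\xi \colon F^*\Omega^{[1]}_X(\log D) \to \Omega^{[1]}_X(\log D)$ from (\ref{eq:xi-Frobenius-lift}) together with its exterior powers, and the splitting of Frobenius for $F$-liftable pairs (Remark \ref{rem:Frobenius liftable singularities}(1)). The key point is that $\xi$ lets us compare a sheaf with its own Frobenius pullback, and that the condition $\widetilde F^*\widetilde D_r = p\widetilde D_r$ means $\xi$ interacts well with the twist by $-D$. Concretely, on the normal crossing locus $U$ one has the standard fact (see \cite[\S 4.2]{DI87} or \cite[(3.1)]{AWZ2}) that the Frobenius lift induces, for each $i$, an isomorphism $\xi_i \colon F^*\Omega^i_U(\log D|_U) \xrightarrow{\sim} \Omega^i_U(\log D|_U)$ which is compatible with wedge products and with the residue maps along the $D_r$; reflexifying gives a map $F^*\Omega^{[i]}_X(\log D) \to \Omega^{[i]}_X(\log D)$ which is an isomorphism in codimension one, hence (both sheaves being reflexive) an isomorphism.

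First I would record that $F^*\sO_X(D) \cong \sO_X(pD)$ and, more importantly, that $\xi_i$ together with the relation $\widetilde F^*(\widetilde D_r) = p\widetilde D_r$ yields an isomorphism
\[
F^*\bigl(\Omega^{[i]}_X(\log D)(-D)\bigr) \;\cong\; \Omega^{[i]}_X(\log D)(-pD).
\]
Indeed $F^*(\Omega^{[i]}_X(\log D)(-D)) = F^*\Omega^{[i]}_X(\log D) \otimes \sO_X(-pD) \cong \Omega^{[i]}_X(\log D)(-pD)$, using that the twist down by $-D$ is exactly matched by the pole-order bookkeeping along $D$ in the Cartier-type description of $\xi$; the point is that the map $\xi_i$ restricted to the subsheaf with a zero along $D$ lands in the subsheaf with a $p$-fold zero along $D$. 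Then, twisting by an ample $A$ and iterating $\xi$ a suitable number of times, I would compare $H^j(X, \Omega^{[i]}_X(\log D)(-D+A))$ with $H^j(X, \Omega^{[i]}_X(\log D)(-p^eD + A'))$ for $A' = $ a suitable pullback-adjusted divisor, exploiting that cohomology of $F^e_*(-)$ agrees with cohomology of the pullback since $F^e$ is affine and finite.

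The heart of the argument is then a vanishing input that survives the $\xi$-iteration: I would use that $(X,D)$ is globally $F$-split, so that $\sO_X(-D) \hookrightarrow F_*\sO_X(-pD)$ splits (this is the $F$-splitting compatible with $D$), together with Serre vanishing. The standard mechanism (as in Totaro's argument \cite[Theorem 3.1]{Totaro-endo}, and in the int-amplified endomorphism setting) is: the iterated map $\xi^e$ exhibits $\Omega^{[i]}_X(\log D)(-D+A)$ as a direct summand — after twisting — of $F^e_*\bigl(\Omega^{[i]}_X(\log D)(-D) \otimes (\text{very ample})\bigr)$, and for $e \gg 0$ the inner sheaf has no higher cohomology by Serre vanishing on the normal projective variety $X$ (reflexivity and $S_2$-ness of $\Omega^{[i]}_X(\log D)$ from Lemma \ref{lem:exact sequence} let us ignore the non-snc locus, which has codimension $\geq 2$ by Remark \ref{rem:Frobenius liftable singularities}(2)). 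Since a direct summand of a cohomologically trivial sheaf is cohomologically trivial, and since the map is injective on $H^j$ for all $e$ by the splitting, we get $H^j(X, \Omega^{[i]}_X(\log D)(-D+A)) = 0$ for $j > 0$.

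The main obstacle I anticipate is making the "direct summand" claim precise in the presence of the $-D$ twist and the non-snc locus $Z$: one must check that $\xi$ (equivalently, the Frobenius lift $\widetilde F$) genuinely splits — not merely maps — the relevant sheaves, and that this splitting is the one compatible with the divisor $D$. For the splitting I would invoke the general principle that an $F$-liftable pair is globally $F$-split via a splitting that is compatible with each $\widetilde D_r$ (Remark \ref{rem:Frobenius liftable singularities}(1) and its proof in \cite[Corollary 3.7]{Kawakami-Takamatsu}), and then tensor that splitting of $\sO_X$-modules with $\Omega^{[i]}_X(\log D)$, being careful that $\Omega^{[i]}_X(\log D)$ is locally free on $U$ so the tensored sequence still splits there, and then reflexify across $Z$. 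A secondary technical point is justifying that reflexification commutes with the cohomological comparisons, which is fine because everything is an isomorphism away from codimension $2$ and all sheaves in play are reflexive, hence determined by their restriction to $U$.
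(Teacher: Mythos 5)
Your overall template (produce a split injection of $\Omega^{[i]}_X(\log D)(A-D)$ into a Frobenius pushforward, then apply Serre vanishing) is the right shape, but the specific splitting you invoke does not exist, and there are two intertwined gaps.

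First, you assert that the Frobenius lift gives an \emph{isomorphism} $\xi_i\colon F^*\Omega^i_U(\log D|_U)\xrightarrow{\;\sim\;}\Omega^i_U(\log D|_U)$, and use this to identify $F^*\bigl(\Omega^{[i]}_X(\log D)(-D)\bigr)$ with $\Omega^{[i]}_X(\log D)(-pD)$. This is false: the map $\xi=d\widetilde F/p$ is only an isomorphism at the generic point, as the paper states explicitly right after (\ref{eq:xi-Frobenius-lift}). Already for $\widetilde F(x)=x^p$ on $\widetilde{\mathbb{A}}^1$ one has $\xi(dx)=x^{p-1}\,dx$, which vanishes along $\{x=0\}$; so $\xi$ is typically not even an isomorphism in codimension one on the normal crossing locus, and the ``iterate $\xi$'' step has no footing. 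Second, and independently, the split injection you propose to manufacture by tensoring the $D$-compatible $F$-splitting $\sO_X(-D)\hookrightarrow F_*\sO_X(-pD)$ with $\Omega^{[i]}_X(\log D)$ lands in the wrong target. By the projection formula, $F_*\sO_X(-pD)\otimes\Omega^{[i]}_X(\log D)=F_*\bigl(F^*\Omega^{[i]}_X(\log D)\otimes\sO_X(-pD)\bigr)$, so you obtain a split injection into $F_*\bigl(F^*\Omega^{[i]}_X(\log D)(-pD)\bigr)$, not into $F_*\bigl(\Omega^{[i]}_X(\log D)(-pD)\bigr)$. The stray $F^*$ is precisely the obstruction: post-composing with $F_*\xi_i$ neither removes it in a way compatible with the splitting nor helps, since $\xi_i$ is not an isomorphism. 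Tensoring the $F$-splitting of $\sO_X$ with a non-trivial locally free sheaf can never produce the needed self-map, for exactly this reason.

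What the paper uses instead is the dedicated Deligne--Illusie split surjection $F_*\Omega^{d-i}_U(\log D)\twoheadrightarrow\Omega^{d-i}_U(\log D)$ supplied by the Frobenius lift (\cite[Proposition 3.2 and Variant 3.2.2]{AWZ}); this is a genuinely stronger structural consequence of $F$-liftability than global $F$-splitting of $\sO_X$, and it is not obtainable by tensoring. One then applies $\sHom_{\sO_U}(-,\omega_U)$ together with the log-wedge perfect pairing $\Omega^{d-i}_U(\log D)\otimes\Omega^i_U(\log D)\to\omega_U(D)$ and Grothendieck duality for $F^e$ to obtain a split injection $\Omega^i_U(\log D)(-D)\hookrightarrow F^e_*\Omega^i_U(\log D)(-D)$ with no stray $F^*$. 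Twisting by $\sO_U(A)$ (which becomes $p^eA$ inside $F^e_*$ via the projection formula), pushing forward across the codimension $\geq 2$ complement of $U$ to reflexify, and invoking Serre vanishing for $e\gg 0$ then completes the proof. If you replace your two splitting steps by this one citation and dualization, your outline becomes essentially the paper's proof.
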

\begin{proof}
    Fix $i\geq 0$.
    Let $U$ be a normal crossing locus of $(X,D)$.
    By \cite[Proposition 3.2 and Variant 3.2.2]{AWZ}, there exists a split surjection
    \[
    F_{*}\Omega^{d-i}_U(\log D)\to \Omega^{d-i}_U(\log D),
    \]
    and thus a split surjection 
    \[
    F^e_{*}\Omega^{d-i}_U(\log D)\to \Omega^{d-i}_U(\log D)
    \]
    for all $e>0$.
    By taking $\sHom_{\sO_U}(-,\omega_U)$, we have a split injection
    \[
    \Omega^{i}_U(\log D)(-D) \to F^e_{*}\Omega^{i}_U(\log D)(-D).
    \]
    Tensoring with $\sO_U(A)$, we obtain a split injection
    \[
    \Omega^{i}_U(\log D)(A-D) \to F^e_{*}\Omega^{i}_U(\log D)(p^eA-D).
    \]
    Pushing forward by the inclusion ${j \colon} U\hookrightarrow X$, we {get a split injection}
    \[
    \Omega^{[i]}_X(\log D)(A-D) \to F^e_{*}\Omega^{[i]}_X(\log D)(p^eA-D).
    \]
    {In particular, the induced map on cohomologies}
    \[
    H^j(X, \Omega^{[i]}_X(\log D)(A-D))\to H^j(X, \Omega^{[i]}_X(\log D)(p^eA-D))
    \]
    {is injective} for all $j>0$. Since $A$ is ample, we have $H^j(\Omega^{[i]}_X(\log D)(p^eA-D))$ for sufficiently large $e\gg0$ by Serre vanishing, which concludes the proof.
\end{proof}

\begin{thm}\label{thm:global section of F-lift pair}
    Let $k$ be an algebraically closed field of characteristic $p>0$.
    Let $X$ be a smooth projective variety over $k$ such that $H^1(X,\Omega^1_X)\neq 0$.
    Suppose that there exists an ample prime divisor $D$ on $X$ such that $(X,D)$ is $F$-liftable.
    Then the following statements hold:
    \begin{enumerate}
        \item[\textup{(1)}] $H^1(X,\Omega^1_X)\cong k$.
        \item[\textup{(2)}] $H^0(X, \Omega^1_X)\cong H^0(X, \Omega^{[1]}_X(\log D))$.
    \end{enumerate}
\end{thm}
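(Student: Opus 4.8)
The plan is to feed the $F$-liftability of $(X,D)$ into log Bott vanishing (Proposition \ref{prop:log Bott}) and then combine the resulting vanishing with the residue exact sequence \eqref{residue} of Lemma \ref{lem:exact sequence}. First I would observe that since $(X,D)$ is $F$-liftable and $D$ is ample, hence $\Q$-Cartier, the pair $(X,D)$ is $F$-pure by Remark \ref{rem:Frobenius liftable singularities}(1), so by Remark \ref{rem:Frobenius liftable singularities}(2) the pair $(X,D)$ is lc and in particular normal crossing at codimension $\leq 2$ points of $X$. Thus Setting \ref{setting:global 1-forms} applies: we are in the situation where \eqref{residue} holds, namely
\[
0\to \Omega^1_X\to \Omega^{[1]}_X(\log D)\to \nu_*\mathcal O_{D'}\to 0,
\]
with $\nu\colon D'\to D$ the normalization. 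Taking the long exact sequence of cohomology, the key input will be the connecting map $\delta_X\colon H^0(D',\mathcal O_{D'})\to H^1(X,\Omega^1_X)$, which by Lemma \ref{boundary} sends $1\mapsto c(\mathcal O_X(D))$.

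Next I would apply Proposition \ref{prop:log Bott} with $i=1$ and $A=0$ — here $-D+A = -D$, so we obtain $H^j(X,\Omega^{[1]}_X(\log D)(-D))=0$ for all $j>0$; more to the point, I want to apply it in the form that controls $H^1(X,\Omega^1_X)$ directly. Concretely, twisting the dual of the residue sequence, or rather tensoring \eqref{residue} by $\sO_X(-D)$ and comparing, one extracts that log Bott vanishing for $(X,D)$ forces the cokernel of $\delta_X$ to be forced to vanish in a suitable range; the cleanest route is: since $D'$ is a normal projective variety (being the normalization of the irreducible divisor $D$), $H^0(D',\mathcal O_{D'})\cong k$, so $\delta_X$ has a one-dimensional source. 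Because $D$ is ample and $(X,D)$ is $F$-liftable, the class $c(\mathcal O_X(D))\in H^1(X,\Omega^1_X)$ is nonzero — this is where I would need a short argument: $c(\mathcal O_X(D))=0$ together with $H^1(X,\Omega^1_X)\ne 0$ would contradict the injectivity statement coming out of the split injection $\Omega^1_X \hookrightarrow F^e_*\Omega^1_X(p^eA)$ of Proposition \ref{prop:log Bott}'s proof applied with $i=1$, $D$ replaced by $0$; more simply, ampleness of $D$ plus $F$-splitness (so that the Frobenius-pullback on $H^1(X,\Omega^1_X)$ is injective by the magic-cover philosophy, Lemma \ref{lem:p-linear}) shows $c(\mathcal O_X(D))\ne 0$. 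Given this, $\delta_X$ is injective, so from the long exact sequence $H^0(X,\Omega^1_X)\cong H^0(X,\Omega^{[1]}_X(\log D))$, which is (2) — exactly as in Proposition \ref{h00}.

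For (1), I would use the continuation of the long exact sequence: since $\delta_X$ is injective with $1$-dimensional source, its image is a $1$-dimensional subspace of $H^1(X,\Omega^1_X)$, and then $H^1(X,\Omega^1_X)/\langle c(\mathcal O_X(D))\rangle \hookrightarrow H^1(X,\Omega^{[1]}_X(\log D))$. So it suffices to show $H^1(X,\Omega^{[1]}_X(\log D))=0$. This is where Proposition \ref{prop:log Bott} does the real work, but as stated it gives vanishing of $H^j(X,\Omega^{[i]}_X(\log D)(-D+A))$, i.e.\ with a $-D$ twist — not quite $H^1(X,\Omega^{[1]}_X(\log D))$ itself. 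The resolution is to use the other half of the argument in the proof of Proposition \ref{prop:log Bott}: the split surjection $F^e_*\Omega^{[1]}_X(\log D)\twoheadrightarrow \Omega^{[1]}_X(\log D)$ (without dualizing) shows $H^1(X,\Omega^{[1]}_X(\log D))$ injects into $H^1(X,F^e_*\Omega^{[1]}_X(\log D))=H^1(X,\Omega^{[1]}_X(\log D))$ — that alone is not enough, so instead I twist: the split surjection tensored with $\sO_X(p^eA)$-type bookkeeping does not apply since there is no twist to push to infinity.

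Therefore the honest argument for (1) goes back through \eqref{residue}: tensor it with an ample $A$ and chase, or — cleaner — note that $H^1(X,\Omega^{[1]}_X(\log D))$ fits, via \eqref{residue}, between $H^1(X,\Omega^1_X)$ (coker of $\delta_X$, which is $(\dim H^1(X,\Omega^1_X))-1$ dimensional) and $H^1(D',\mathcal O_{D'})$, together with $H^1(D, \nu_* \mathcal O_{D'})$. I expect the main obstacle to be precisely this: extracting $H^1(X,\Omega^1_X)\cong k$ requires showing the cokernel of $\delta_X$ vanishes, equivalently that $H^1(X,\Omega^{[1]}_X(\log D))$ receives nothing from $H^1(X,\Omega^1_X)$, and the only tool producing such a vanishing is log Bott vanishing, which natively carries a $-D$ twist. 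The fix I would pursue: apply Proposition \ref{prop:log Bott} with $i=1$ and a well-chosen ample $A$, use the dual residue sequence $0\to \Omega^{[1]}_X(\log D)(-D)\to \Omega^1_X \to (\text{torsion on }D)\to 0$ obtained by dualizing \eqref{residue} (or directly from the snc model), twist by $A$, and run Serre duality / the $F^e_*$ argument to transfer the vanishing of $H^{n-1}(X,\Omega^{[n-1]}_X(\log D)(-D+A))$ to the statement that $\delta_X$ is surjective onto $H^1(X,\Omega^1_X)$ in the limit — forcing $H^1(X,\Omega^1_X)\cong k$. I would write this last transfer carefully, as it is the only genuinely nontrivial step; everything else is the formal long-exact-sequence bookkeeping already exemplified in Proposition \ref{h00} and Lemma \ref{boundary}.
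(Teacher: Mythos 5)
Your proposal stalls at exactly the point that the paper's proof makes trivial: you observe that Proposition \ref{prop:log Bott} gives vanishing of $H^j(X,\Omega^{[i]}_X(\log D)(-D+A))$ "with a $-D$ twist --- not quite $H^1(X,\Omega^{[1]}_X(\log D))$ itself," and then you go hunting for a workaround involving Serre duality and dual residue sequences. But the hypothesis of the theorem is that $D$ is \emph{ample}, so you may simply take $A=D$ in Proposition \ref{prop:log Bott}; then $-D+A=0$ and the proposition hands you $H^1(X,\Omega^{[1]}_X(\log D))=0$ directly. That single observation collapses the entire difficulty.

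Once you have $H^1(X,\Omega^{[1]}_X(\log D))=0$, the long exact sequence of \eqref{residue} reads
\[
0\to H^0(X,\Omega^1_X)\to H^0(X,\Omega^{[1]}_X(\log D))\to k\cong H^0(D',\mathcal O_{D'})\xrightarrow{\ \delta_X\ } H^1(X,\Omega^1_X)\to 0,
\]
and since $H^1(X,\Omega^1_X)\neq 0$ by hypothesis while the source of $\delta_X$ is $1$-dimensional, $\delta_X$ must be an isomorphism. This gives (1) and (2) simultaneously, with no need for Lemma \ref{boundary}, Proposition \ref{h00}, or any independent argument that $c(\mathcal O_X(D))\neq 0$. (Indeed the paper explicitly flags Lemma \ref{boundary} and Proposition \ref{h00} as ``not needed.'') In particular, the sketch you offer for why $c(\mathcal O_X(D))\neq 0$ --- invoking ``$F$-splitness plus ampleness'' and ``the magic-cover philosophy'' --- is not a sound argument and would need real work; the remark following the theorem even warns that injectivity of $\mathrm{dlog}$ in positive characteristic requires hypotheses such as $H^0(X,\Omega^1_X)=H^0(X,\Omega^2_X)=0$. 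Fortunately none of that is necessary once you take $A=D$.

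Your treatment of the preliminaries (using Remark \ref{rem:Frobenius liftable singularities} to get normal crossing in codimension $\leq 2$ so that Lemma \ref{lem:exact sequence} applies) is correct and matches the paper.
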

\begin{rem}
    By \cite[Proposition 8.11]{Kawakami-Tanaka3} (cf.~\cite[Theorem 4.6]{vdGK03}),
the group homomorphism
\[
\mathrm{dlog}\colon \Pic(X)\otimes_{\Z}\mathbb{F}_p\to H^1(X,\Omega^1_{X}).
\]
is injective when $H^0(X,\Omega^1_X)=H^0(X,\Omega^2_X)=0$. In particular, the assumption that $H^1(X,\Omega^1_X)\neq 0$ is satisfied if $H^0(X,\Omega^1_X)=H^0(X,\Omega^2_X)=0$. 

Of course, in characteristic zero we always have $H^1(X, \Omega^1_X) \neq 0$, but it is unclear whether the same holds in positive characteristic, and no counterexample is currently known. \end{rem}
\begin{proof} 
    By Remark \ref{rem:Frobenius liftable singularities} (1) and (2), the pair $(X, D)$ is normal crossing in codimension $\leq 2$ in $X$. 
    Thus, by Lemma \ref{lem:exact sequence}, we have the short exact sequence
    \[
    0\to \Omega^1_X\to \Omega^{[1]}_X(\log D)\to \nu_*\mathcal{O}_{D'}\to 0
    \]
    for the normalization $\nu\colon D'\to D$.
    {In turn, we get a} long exact sequence
    \begin{multline*}
        0\to H^0(X, \Omega^1_X)\to H^0(X, \Omega^{[1]}_X(\log D))\to  k\cong H^0(D',\sO_{D'})\\
        \xrightarrow{\delta_X} H^1(X,\Omega^{1}_X)\to H^1(X,\Omega^{[1]}_X(\log D))=0,
    \end{multline*}
    where the last vanishing follows from log Bott vanishing (Proposition \ref{prop:log Bott}).
    From the above exact sequence, we can deduce the assertions.
\end{proof}

\begin{lem}\label{lem:general line}
         Let $k$ be an algebraically closed field, and
        let $D$ be a prime divisor on $X\coloneqq \mathbb{P}^n_k$.
        If $\deg(D)\geq 2$, then $\dim_{k}H^0(L, \Omega_X^{[1]}(\log D)|_L)\geq 1$ for a general line $L$.
\end{lem}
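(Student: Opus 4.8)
The plan is to compute $H^0(L, \Omega^{[1]}_X(\log D)|_L)$ explicitly using the exact sequence from Lemma~\ref{lem:exact sequence} restricted to a general line. First I would note that for a general line $L$, the intersection $L \cap D$ consists of $d := \deg(D)$ distinct points lying in the normal crossing (indeed smooth) locus $U$ of $(X,D)$, and $L$ meets $D$ transversally there; this is a standard Bertini-type argument. Hence near these points the pair looks snc, and restricting the sequence $0 \to \Omega^1_X \to \Omega^{[1]}_X(\log D) \to \nu_*\mathcal O_{D'} \to 0$ to $L$ is well-behaved: since $L \not\subseteq D$ and $L$ meets $D$ only in the smooth locus, we get
\[
0 \to \Omega^1_X|_L \to \Omega^{[1]}_X(\log D)|_L \to \mathcal O_{L \cap D} \to 0,
\]
where $\mathcal O_{L \cap D}$ is a skyscraper of length $d$. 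One must check that the sequence stays exact after restriction (no $\mathrm{Tor}_1$ contribution), which follows because $\nu_*\mathcal O_{D'}$ restricted to the general $L$ meets $D$ in the Cartier locus, or more simply because all three sheaves are locally free in a neighborhood of $L$ when we work on the open set $U$ and $L \subseteq U$ for general $L$.

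Next I would take the long exact sequence in cohomology on $L \cong \mathbb P^1$. We have $\Omega^1_X|_L \cong \mathcal O_{\mathbb P^1}(-2) \oplus \mathcal O_{\mathbb P^1}(-1)^{\oplus(n-1)}$ by the restriction of the Euler sequence to a line. Therefore $H^0(L, \Omega^1_X|_L) = 0$ and $H^1(L, \Omega^1_X|_L) \cong k$ (coming from the $\mathcal O(-2)$ summand). The long exact sequence reads
\[
0 \to H^0(L,\Omega^{[1]}_X(\log D)|_L) \to H^0(L, \mathcal O_{L\cap D}) \xrightarrow{\partial} H^1(L,\Omega^1_X|_L) \to \cdots,
\]
with $\dim_k H^0(L, \mathcal O_{L \cap D}) = d$ and $\dim_k H^1(L,\Omega^1_X|_L) = 1$. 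Thus $\dim_k H^0(L,\Omega^{[1]}_X(\log D)|_L) = d - \mathrm{rank}(\partial) \geq d - 1 \geq 1$ since $d \geq 2$. This gives the claimed inequality.

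The main obstacle — really the only delicate point — is justifying that the restricted sequence is exact on the left, i.e. that one may restrict $0 \to \Omega^1_X \to \Omega^{[1]}_X(\log D) \to \nu_*\mathcal O_{D'} \to 0$ to a general line $L$ without losing exactness, and identifying the restriction of $\nu_*\mathcal O_{D'}$ with the structure sheaf of the $d$ reduced transversal points. For a general $L$ we can arrange $L \subseteq U$ (since $Z = X \setminus U$ has codimension $\geq 2$, a general line avoids it), so on $L$ the sheaf $\Omega^{[1]}_X(\log D)|_L = \Omega^1_X(\log D)|_L$ is locally free and the sequence is a sequence of vector bundles on $L$ tensored against $\mathcal O_L$; exactness is then automatic because $\mathrm{Tor}_1^{\mathcal O_X}(\mathcal O_L, \nu_*\mathcal O_{D'})$ vanishes when $L$ meets $D$ transversally at smooth points. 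Everything else is a routine Euler-sequence computation, so the proof is short once this transversality input is in place.
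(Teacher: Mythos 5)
Your proof is correct, but it takes a different route than the paper. The paper's argument is shorter and more direct: since a general $L$ avoids the singular locus of $D$ (which has codimension $\geq 2$ in $X$), the sheaf $\Omega_X^{[1]}(\log D)|_L$ is locally free of rank $n$, hence splits as $\bigoplus_{i=1}^n \mathcal O_L(a_i)$ by Grothendieck's theorem; then one simply computes $\sum a_i = \deg(\omega_X(D)|_L) = \deg D - (n+1)$, and the assumption $\deg D \geq 2$ gives $\sum a_i > -n$, so at least one $a_i$ is nonnegative, forcing $H^0 \neq 0$. Your approach instead restricts the residue sequence to $L$ and runs the long exact sequence in cohomology. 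Both are valid, and your argument even yields the sharper bound $\dim_k H^0(L, \Omega_X^{[1]}(\log D)|_L) \geq \deg D - 1$, at the cost of having to verify (as you correctly flag) that the residue sequence restricts exactly to $L$ -- which does follow since $L \not\subseteq D$ and $L$ meets $D$ transversally at smooth points of $D$, so $\mathrm{Tor}_1^{\mathcal O_X}(\nu_*\mathcal O_{D'}, \mathcal O_L)=0$. One minor caveat: the residue sequence of Lemma~\ref{lem:exact sequence} is stated under the hypothesis that $(X,D)$ is normal crossing at points of codimension $\leq 2$, which is not assumed in Lemma~\ref{lem:general line}; but since your general $L$ lies entirely in the locus where $D$ is smooth (or disjoint from $D$), you only need the residue sequence over that open set, where $(X,D)$ is snc and the sequence is just \eqref{eq:ses-diff-snc}, so this is harmless.
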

    \begin{proof}
  Since $L$ is general, we may assume that $L$ intersects $D$ at smooth points only, and thus that $\Omega_X^{[1]}(\log D)|_L$ is locally free. 
  Hence $\Omega_X^{[1]}(\log D)|_L\cong \oplus_{i=1}^n \mathcal{O}_L(a_i)$ for some integers $a_i$.  
Then we have an equality
\begin{align*}
    \sum_ia_i&=\text{ deg }( \Omega_X^{[1]}(\mathrm{log} D)|_L)=\deg(\omega_X(D)|_L)=\deg_X(D)-(n+1).
\end{align*}
If $\deg(D) \geq 2$, then at least one of the $a_i$ must be non-negative, and we have
$\dim_k H^0(L, \Omega_X^{[1]}(\log D)|_L) \geq 1$.
\end{proof}

\begin{lem}\label{small codim}
    Let $k$ be an algebraically closed field.
    \begin{enumerate}
        \item[\textup{(1)}] Let $Z_1$ be a proper closed subset of $\mathbb{P}^n_k$ and let $W_1 \subseteq\Gr(2,n+1)$ be the closed subset consisting of lines in $\mathbb{P}^n_k$ contained in $Z_1$. Then $\dim W_1\leq 2n-4$.
        \item[\textup{(2)}] Let $Z_2$ be a closed subset of $\mathbb{P}^n_k$ satisfying $\dim Z_2\leq n-3$ and let $W_2 \subseteq \Gr(2,n+1)$ be the closed subset consisting of lines in $\mathbb{P}^n_k$ intersecting $Z_2$. Then $\dim W_2\leq 2n-4$.
    \end{enumerate}
\end{lem}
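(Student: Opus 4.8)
The plan is to prove both statements by a dimension count on the incidence variety of lines and points, which is standard but needs to be assembled carefully. For both parts we will use the universal line over $\Gr(2,n+1)$, namely the incidence variety
\[
\Phi = \{(L,x) \in \Gr(2,n+1) \times \mathbb P^n_k : x \in L\},
\]
which comes with the two projections $q\colon \Phi \to \Gr(2,n+1)$ and $p\colon \Phi \to \mathbb P^n_k$. The key numerical inputs are that $\dim \Gr(2,n+1) = 2(n-1)$, that $q$ is a $\mathbb P^1$-bundle so $\dim \Phi = 2n-1$, and that $p$ is surjective with all fibers $p^{-1}(x)$ isomorphic to the space of lines through a fixed point, which is a $\mathbb P^{n-1}$, hence of dimension $n-1$.

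For part (1), I would argue as follows. Replacing $Z_1$ by an irreducible component of maximal dimension, we may assume $Z_1$ is irreducible with $\dim Z_1 \le n-1$. Then $W_1$ is (up to taking reduced structure) the image under $q$ of the closed subset $p^{-1}(Z_1)$ intersected with the locus of lines wholly contained in $Z_1$; more cleanly, let $\Phi_{Z_1} = q^{-1}(W_1) \subseteq \Phi$, so $\Phi_{Z_1}$ is the universal line over $W_1$ and maps onto $W_1$ with one-dimensional fibers, giving $\dim \Phi_{Z_1} = \dim W_1 + 1$. On the other hand $\Phi_{Z_1} \subseteq p^{-1}(Z_1)$ since every line parametrized by $W_1$ lies in $Z_1$, so $p$ restricts to a map $\Phi_{Z_1} \to Z_1$ whose image has dimension at most $n-1$ and whose fibers over a point $x \in Z_1$ are contained in $p^{-1}(x) \cong \mathbb P^{n-1}$, hence of dimension at most $n-1$. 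Therefore $\dim \Phi_{Z_1} \le (n-1) + (n-1) = 2n-2$, so $\dim W_1 \le 2n-3$. To sharpen this to $2n-4$ I would observe that if the general fiber of $\Phi_{Z_1} \to p(\Phi_{Z_1})$ had dimension exactly $n-1$, then through a general point $x$ of an $(n-1)$-dimensional image every line through $x$ would lie in $Z_1$, forcing $Z_1 = \mathbb P^n_k$ (since the union of all lines through $x$ is all of $\mathbb P^n_k$), a contradiction; hence that fiber dimension is at most $n-2$, or else the image has dimension at most $n-2$, and in either case $\dim \Phi_{Z_1} \le 2n-3$, giving $\dim W_1 \le 2n-4$.

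For part (2) the count is cleaner. Again we may take $Z_2$ irreducible with $\dim Z_2 \le n-3$. Now $W_2$ is the image $q(p^{-1}(Z_2))$, so $\dim W_2 \le \dim p^{-1}(Z_2)$. Since $p$ has fibers of dimension $n-1$, we get $\dim p^{-1}(Z_2) \le \dim Z_2 + (n-1) \le (n-3)+(n-1) = 2n-4$. Hence $\dim W_2 \le 2n-4$, which is exactly the claim — note here $q$ restricted to $p^{-1}(Z_2)$ is actually generically finite (a general line meets $Z_2$ in finitely many points when $\dim Z_2 \le n-3$), though we do not even need this for the stated inequality.

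The main obstacle is the sharpening in part (1) from $2n-3$ down to $2n-4$: the naive fiber-dimension estimate only gives $\dim W_1 \le 2n-3$, and one genuinely must rule out the extremal case using the geometry of lines through a point (the cone over $x$ sweeping out all of $\mathbb P^n_k$), for which keeping track of whether it is the fiber dimension or the image dimension that drops requires a small case analysis. Everything else is bookkeeping with the incidence correspondence and the fiber-dimension theorem for morphisms of varieties over an algebraically closed field. I would also note at the outset that it suffices to treat the irreducible case, since dimension and the property of "containing"/"meeting" a closed set both behave well under passing to an irreducible component of maximal dimension.
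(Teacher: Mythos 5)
Your proof is correct and follows essentially the same incidence-correspondence and fiber-dimension count as the paper; the only cosmetic difference is that in part (1) the paper observes directly that \emph{every} fiber $p_2^{-1}(z_1)$ is a proper closed subvariety of the $\mathbb{P}^{n-1}$ of lines through $z_1$ (hence has dimension $\le n-2$) because $Z_1 \ne \mathbb{P}^n_k$, rather than doing the case split you perform between the generic fiber dimension and the image dimension. Both versions hinge on the same key observation, namely that the union of all lines through a point is all of $\mathbb{P}^n_k$.
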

\noindent Recall that $\Gr(2,n+1)$ is the Grassmanian variety of lines in $\mathbb{P}^n_k$ and $\dim \Gr(2,n+1) = 2n-2$.
\begin{proof} 
We first prove (1). Let $D_1 \subseteq W_1\times Z_1$ be the closed subset consisting of pairs \[(L, x)\in W_1\times Z_1\] such that $x\in L$. Note that, by definition, $L \subseteq Z_1$. We have two projections $p_1\colon D_1\to W_1$ and $p_2\colon D_1\to Z_1$. 
        For $z_1\in Z_1$, we have that $p_2^{-1}(z_1) \subseteq W_1$ is a closed subvariety of the variety $\mathbb{P}^{n-1}$ of the lines in $\mathbb{P}^n_k$ passing through $z_1$ and contained in $Z_1$.
        Since $Z_1\neq \mathbb{P}^n_k$, 
        the subvariety $p_2^{-1}(z_1)$ does not coincide with $\mathbb{P}^{n-1}$, and thus $\dim p_2^{-1}(z_1)\leq n-2$. Now, we obtain
    \[
    \dim D_1\leq \dim Z_1+ \dim p_2^{-1}(z_1)\leq n-1+n-2=2n-3.
    \] 
Finally, each fiber of  $p_1$ is $\mathbb{P}^1$, and so we conclude that $\dim W_1=  \dim D_1-1\leq 2n-4$.

We next prove (2). Let $D_2 \subseteq  W_2\times Z_2$ be the closed subset consisting of pairs \[(L, x)\in W_2\times Z_2\] such that $x\in L$. We have two projections $p_1\colon D_2\to W_2$ and $p_2\colon D_2\to Z_2$.
For $z_2\in Z_2$, note that $p_2^{-1}(z_2)$ is the variety $\mathbb{P}^{n-1}$ of the lines in $\mathbb{P}^n_k$ passing through $z_2$. Thus, $\dim p_2^{-1}(z_2)= n-1$, and we have
    \[
    \dim D_2\leq \dim Z_2+ \dim p_2^{-1}(z_2)\leq n-3+n-1=2n-4.
    \] 
    Since $p_1$ is surjective, we conclude that $\dim W_2\leq \dim D_2\leq 2n-4$.
\end{proof}

\begin{thm}\label{thm:key}
   Let $k$ be an algebraically closed field of characteristic $p>0$ and let $D$ be a prime divisor on $X\coloneqq\mathbb{P}^n_k$. Suppose that $(X, D)$ is $F$-liftable. 
   Then $\deg(D)=1$.
\end{thm}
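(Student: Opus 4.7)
The plan is to assume for contradiction that $\deg D \geq 2$ and construct a non-zero global section of $\Omega^{[1]}_X(\log D)$. This would contradict Theorem \ref{thm:global section of F-lift pair}: since $D$ is ample and $H^1(X,\Omega^1_X)\cong k \neq 0$ for $X=\mathbb{P}^n_k$, and $H^0(X,\Omega^1_X)=0$, that theorem forces $H^0(X,\Omega^{[1]}_X(\log D))=0$.

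By Remark \ref{rem:Frobenius liftable singularities}(2), $(X,D)$ is normal crossing in codimension $\leq 2$, and $\xi$ is an isomorphism on the normal crossing locus by the standard Frobenius-lift construction of \cite[(3.1)]{AWZ2}. Thus the open set $U' \subseteq X$ on which $\xi$ is an isomorphism has complement of codimension $\geq 3$. I would then apply Lemma \ref{small codim}(2) with $Z_2 = X\setminus U'$ to pick a general line $L \subseteq \mathbb{P}^n_k$ that lies entirely in $U'$, meets $D$ transversely at smooth points of $D$, and is not contained in $D$. For such $L$, the restriction $\Omega^{[1]}_X(\log D)|_L$ is locally free, and Lemma \ref{lem:general line} provides $r \coloneqq \dim_k H^0(L, \Omega^{[1]}_X(\log D)|_L) \geq 1$. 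Because $\xi|_L$ is an isomorphism of sheaves, it induces a Frobenius-linear endomorphism of $V \coloneqq H^0(L,\Omega^{[1]}_X(\log D)|_L)$ whose linearisation $F^*V \to V$ is bijective; Lemma \ref{lem:p-linear} then yields a non-zero $\xi$-fixed section $s \in V^{\xi} \cong \mathbb{F}_p^r$.

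Next, I would propagate $s$ using the magic cover $\mathcal{G}\coloneqq (\Omega^1_X(\log D))^{\xi}$. By Lemma \ref{lemma:magic-cover}, $\mathcal{G}|_{U'}$ is a locally constant \'etale $\mathbb{F}_p$-sheaf of rank $p^n$. Since $X=\mathbb{P}^n_k$ is smooth and $X \setminus U'$ has codimension $\geq 2$, Zariski--Nagata purity of the branch locus gives $\pi_1^{\text{\'et}}(U')=\pi_1^{\text{\'et}}(\mathbb{P}^n_k)=1$, so $\mathcal{G}|_{U'}$ is actually a constant sheaf. The section $s$ on $L \subseteq U'$ then determines a non-zero stalk value $s_x \in \mathcal{G}_x$ at each geometric point $x$ of $L$, and by constancy this propagates uniquely to a global section $\sigma \in H^0(U',\mathcal{G}) \subseteq H^0(U',\Omega^{[1]}_X(\log D))$ with $\sigma|_L = s$. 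Finally, $\Omega^{[1]}_X(\log D)$ is reflexive hence $S_2$, and $X \setminus U'$ has codimension $\geq 2$, so $\sigma$ extends to a non-zero global section $\widetilde{\sigma} \in H^0(X,\Omega^{[1]}_X(\log D))$, contradicting the vanishing above.

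The main obstacle I anticipate is the propagation step. The section $s$ is \emph{a priori} a Zariski section of the coherent sheaf $\Omega^{[1]}_X(\log D)|_L$ satisfying a pointwise invariance condition, whereas $\mathcal{G}$ is an \'etale subsheaf of $\Omega^{[1]}_X(\log D)$; to conclude, one must carefully identify these two types of sections and verify that simple-connectedness of $U'$ (via purity applied to the $\geq 2$-codimensional complement) genuinely converts the fiberwise information about $s$ into an honest Zariski section of $\Omega^{[1]}_X(\log D)$ over $U'$. A secondary technical point is ensuring that the initial section $s$, a general element of $V^{\xi}$, has non-trivial stalk at every point of $L$ so the propagated $\sigma$ is really non-zero, which follows from the local freeness of $\Omega^{[1]}_X(\log D)|_L$ combined with the local constancy of $\mathcal{G}|_{U'}$.
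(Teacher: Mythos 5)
Your proposal correctly identifies the main tools (Theorem \ref{thm:global section of F-lift pair}, Lemma \ref{lem:general line}, the magic cover, Zariski--Nagata purity) and the overall strategy, but it contains a critical false claim that the paper carefully works around.

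\textbf{The gap.} You assert that ``$\xi$ is an isomorphism on the normal crossing locus,'' so that the open set $U'$ where $\xi$ is an isomorphism has complement of codimension $\geq 3$. This is false. The map $\xi$ (defined as in \cite[(3.1)]{AWZ2} by $\omega \mapsto \tfrac{1}{p}\widetilde F^*\widetilde\omega$) depends on the chosen Frobenius lift, and its non-isomorphism locus is in general a \emph{divisor} inside the normal crossing locus. For a concrete example, take $D = \{y_0=0\}$ in the affine chart $\mathbb{A}^n \subset \mathbb{P}^n$ with $\widetilde F(y_i)=y_i^p$: then $\xi(\tfrac{dy_0}{y_0}) = \tfrac{dy_0}{y_0}$, but $\xi(dy_i)=y_i^{p-1}dy_i$ for $i \geq 1$, so $\xi$ degenerates along $\{y_1\cdots y_{n-1}=0\}$. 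The paper is explicit that $\xi$ is an isomorphism only ``at the generic point of $X$.'' Consequently, if $X\setminus U'$ has codimension one, no line of $\mathbb{P}^n$ can lie entirely in $U'$, $U'$ need not be simply connected, and both the choice of $L$ and the purity argument in your proof collapse.

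\textbf{How the paper avoids it.} Instead of requiring lines to lie in the $\xi$-isomorphism locus $U$, the paper only asks that a line (i) avoid the codimension-$\geq 3$ non-normal-crossing locus $Z$ and (ii) \emph{meet} $U$ at some point. It then applies \emph{both} parts of Lemma \ref{small codim}: part (1) with $Z_1 = X\setminus U$ (which may have codimension one) to control lines \emph{contained} in $X\setminus U$, and part (2) with $Z_2 = Z$ to control lines hitting $Z$. The resulting good locus $\mathcal{M}_0$ of lines has complement of codimension $\geq 2$ in the Grassmannian. The paper then works over the universal family $\mathcal U_0 \to \mathcal M_0$, forms the subsheaf $\mathcal G = (\pi^*\pi_*\phi^*\Omega^1_V(\log D)^{\xi})|_{\mathcal U_0}$ of the pullback of the magic cover, and shows it is locally constant of positive rank by combining \emph{upper} semicontinuity (coherent semicontinuity) with \emph{lower} semicontinuity (from \cite[Lemma 6.2.3]{AWZ}) of the fiberwise $\xi$-fixed dimension. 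This sidesteps the issue you flag at the end of your proposal: one never needs to identify a single coherent section on a fixed $L$ with an \'etale-local section of $\mathcal G$; instead the semicontinuity argument produces the required locally constant \'etale subsheaf directly, which then pushes forward along the (simply connected) $\mathbb{P}^{n-1}$-fibers of $\phi$ and across the codimension-$\geq 2$ complement.
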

\begin{proof}
    Since $H^1(X, \Omega^1_X) \neq 0$ and $D$ is an ample prime divisor,  Theorem \ref{thm:global section of F-lift pair} implies 
    \[H^0(X, \Omega^{[1]}_{X}(\log D))=H^0(X,\Omega^1_X)=0.
    \]
    Suppose by contradiction that $\deg(D)\geq 2$.
    Under this assumption, we will show the following claim: 
    \begin{equation} \label{eq:claim-key}
        \dim_{k}H^0(X, \Omega^{[1]}_{X}(\log D))\neq 0
    \end{equation}
    contradicting the above vanishing. 
    
    Let $ V\subseteq X$ be the normal crossing locus of $(X,D)$.
    Set $Z\coloneqq X\setminus V$. By Remark \ref{rem:Frobenius liftable singularities}, we have $\mathrm{codim}_X(Z)\geq 3$.
    Next, by (\ref{eq:xi-Frobenius-lift}), we have a map
    \[
    F^*\Omega^1_V(\log D)\xrightarrow{\xi} \Omega^1_V(\log D).
    \]
     Let $U\subseteq V$ be the open dense subset where $\xi$ is an isomorphism.

   Let  $\mathcal{M}:=\Gr(2,n+1)$ be the Grassmanian variety of lines in $\mathbb{P}^n_k$, and let \[\mathcal{U}'\coloneqq\{(x,L)\in X\times \mathcal{M}\mid x\in L\}\] be 
   the universal family over $\mathcal M$. In particular, we have a natural diagram
   \begin{center}
   \begin{tikzcd}
    \mathcal{U}' \arrow{r}{\phi'} \arrow{d}{\pi'} & X \\
    \mathcal{M} & 
   \end{tikzcd}.
   \end{center}
   
   Let $\mathcal{M}_0\subseteq \mathcal{M}$ be the space of lines $L \subseteq X$ such that $L\cap U\neq \emptyset$ and $L\cap Z= \emptyset$; namely, $L$ intersects the locus where $\xi$ is an isomorphism and $L$ is entirely contained in the normal crossing locus of $(X,D)$.
  By applying Lemma \ref{small codim} to $(Z_1, Z_2)=(X\setminus U, Z),$ we get that $\mathrm{codim}_{\mathcal{M}} (\mathcal{M}\setminus\mathcal{M}_0)\geq 2$. 
  
  Let $\mathcal{U}_0=\mathcal{U}'\times_\mathcal{M} \mathcal{M}_0$
  and $\mathcal{U}\coloneqq \mathcal{U}'\times_X V$. Namely, $\mathcal{U}$ parametrizes pairs $(x,L) \in X \times \mathcal{M}$ such that $x \in L$ and $x$ is contained in the normal crossing locus $V$, while $\mathcal{U}_0$ parametrizes pairs $(x,L)$ such that $x \in L$ and $L \in \mathcal M_0$. These are both open subschemes of $\mathcal{U}'$, and we have a natural inclusion $j\colon \mathcal{U}_0\hookrightarrow \mathcal{U}$.
  
  We can summarize the above discussion with the following commutative diagram:
  \begin{center}
\begin{tikzcd}
& \mathcal{U}\arrow[d, hookrightarrow]\arrow[r, "\phi"]\arrow[dd,bend left,near start,"\pi"] & V\arrow[d, hookrightarrow]\\
\mathcal{U}_0 \arrow[r,hookrightarrow]\arrow[d,"\pi'|_{{\mathcal{U}_0}}"']\arrow[ru,hookrightarrow,"j"] &\mathcal{U}' \arrow[r, "\phi'"]  \arrow[d, "\pi'"'] & X\\ 
\mathcal{M}_0 \arrow[r,hookrightarrow] & \mathcal{M}.  &
\end{tikzcd}
\end{center}
  
\noindent Recall that $\pi'$ is a $\mathbb{P}^1$-bundle and $\phi'$ is a $\mathbb{P}^{n-1}$-bundle.
Thus, $\phi$ is a $\mathbb{P}^{n-1}$-bundle and $\pi'|_{\mathcal{U}_0}\colon \mathcal{U}_0\to \mathcal{M}_0$ a $\mathbb{P}^{1}$-bundle. Since $\mathrm{codim}_{\mathcal{M}} (\mathcal{M}\setminus\mathcal{M}_0)\geq 2$ and $\pi'$ is a $\mathbb{P}^{1}$-bundle, we have $\mathrm{codim}_{\mathcal{U}'} (\mathcal{U}'\setminus\mathcal{U}_0)\geq 2$, and, in particular, $\mathrm{codim}_{\mathcal{U}} (\mathcal{U}\setminus\mathcal{U}_0)\geq 2$.\\

Recall that our goal is to show that $H^0(X, \Omega^{[1]}_X(\log D))\neq 0$. 
Since $\mathrm{codim}_X Z\geq 3$, we have 
\[
H^0(X, \Omega^{[1]}_X(\log D))=H^0(V, \Omega^1_V(\log D)).
\]
Now consider the \'etale sheaf $\Omega^1_V(\log D)^{\xi}$ on $V$ from Lemma \ref{lemma:magic-cover}.  {To conclude the proof it suffices to prove that $\dim_{k}H^0(V, \Omega^1_V(\log D)^{\xi})\neq 0$.}
To see this, it is enough to construct a nonzero constant sheaf contained in $\Omega^1_V(\log D)^{\xi}$. As $\mathbb{P}^n_k$ is simply connected and $\mathrm{codim}_X Z\geq 3$, we have that $V$ is simply connected by Zariski--Nagata purity. Thus, any locally constant \'etale sheaf on $V$ is constant, and so it is enough to show that $\Omega^1_V(\log D)^{\xi}$ contains a nonzero locally constant sheaf.

Define an \'etale subsheaf $\mathcal{G}$ of $\phi^*\Omega^1_V(\log D)^{\xi}|_{\mathcal{U}_0}$ by $\mathcal{G}=(\pi^*\pi_*\phi^*\Omega^1_V(\log D)^{\xi})|_{\mathcal{U}_0}$. Informally speaking, $\mathcal G$ should be the maximal subsheaf of $\phi^*\Omega^1_V(\log D)^{\xi}|_{\mathcal U_0}$ which is constant on each fiber of $\pi \colon \mathcal U_0 \to \mathcal M_0$. Note that 
\begin{align*}
j_* (\phi^*\Omega^1_V(\log D)^{\xi}|_{\mathcal{U}_0}) &=(j_*\phi^*\Omega^1_V(\log D)|_{\mathcal{U}_0})^{\xi} \\
&=(\phi^*\Omega^1_V(\log D))^{\xi} \qquad \text{ (as codim}_{\mathcal{U}} (\mathcal{U}\setminus\mathcal{U}_0)\geq 2)\\
&=\phi^*\Omega^1_V(\log D)^{\xi}.
\end{align*}

Since all fibers of $\phi$ are isomorphic to $\mathbb{P}^{n-1}$, we have \[
\phi_*j_* (\phi^*\Omega^1_V(\log D)^{\xi}|_{\mathcal{U}_0})=\phi_*\phi^*\Omega^1_V(\log D)^{\xi}=\Omega^1_V(\log D)^{\xi}.
\]
Thus, $\phi_*j_*\mathcal{G}$ is an {\'e}tale subsheaf of $\Omega^1_V(\log D)^{\xi}$. Our aim is to show that $\phi_*j_*\mathcal{G}$ is nonzero locally constant. The following claim is needed:

\begin{cl} 
$\mathcal{G}$ is a locally constant \'etale sheaf of rank $r > 0$. 
\end{cl}

Assuming the Claim, we prove the theorem. Since $\codim_{\mathcal{U}} (\mathcal{U}\setminus\mathcal{U}_0)\geq 2$, we have that $j_* \mathcal{G}$ is also locally constant of rank $r$ by Zariski-Nagata purity. Thus, for any fiber $T$ of $\phi$, we have that $j_* \mathcal{G}|_T$ is locally constant of rank $r$.
Since $T\cong \mathbb{P}^{n-1}$ is simply connected, 
$j_* \mathcal{G}|_T$ must be constant.
Thus, $\dim_{\mathbb{F}_p}H^0(T,j_* \mathcal{G}|_T)=r$ for all fibers $T$ of $\phi$. By the proper base change theorem, the stalk of $\phi_*j_*\mathcal{G}$ at every closed point has dimension $r$, and thus $\phi_*j_*\mathcal{G}$ is locally constant of rank $r > 0$. \\

Now we prove the Claim. It suffices to show that there is $r>0$ such that for all $m\in\mathcal{M}_0$, the restriction $\mathcal{G}|_{\pi^{-1}(m)}$ is a constant sheaf of rank $r$. Equivalently, we need to show that there is $r>0$ such that $\dim_{\mathbb{F}_p}(\pi_* \phi^*\Omega^1_V(\log D)^{\xi}|_{\mathcal{U}_0})_m=r$ for all $m\in \mathcal{M}_0$.
By the proper base change theorem, we have
\begin{align*}
    \dim_{\mathbb{F}_p}(\pi_* \phi^*\Omega^1_V(\log D)^{\xi}|_{\mathcal{U}_0})_m &=\dim_{\mathbb{F}_p} H^0(\pi^{-1}(m), \phi^*\Omega^1_V(\log D)^{\xi}|_{\pi^{-1}(m)})\\
    &=\dim_{\mathbb{F}_p} H^0(\pi^{-1}(m),\phi^*\Omega^1_V(\log D)|_{\pi^{-1}(m)})^{\xi}\\
    &=h^0(\pi^{-1}(m), \phi^*\Omega^1_V(\log D)|_{\pi^{-1}(m)}),
\end{align*}
where the last equality follows from Lemma \ref{lem:p-linear}. Since $\Omega^1_V(\log D)$ is locally free, the sheaf $\phi^*\Omega^1_V(\log D)|_{\mathcal{U}_0}$ is flat over $\mathcal{M}_0$. By semi-continuity theorem, the last term in the above equalities is upper semicontinuous on $\mathcal{M}_0$. By \cite[Lemma 6.2.3]{AWZ}, the first term is lower semicontinuous on $\mathcal{M}_0$. Therefore, it is constant of dimension $r$ for all $m\in \mathcal{M}_0$, and by Lemma \ref{lem:general line}, we have $r>0$.
This proves the claim, and we conclude the theorem.
 \end{proof}

\subsection{Proof of the main theorem and applications}
We are ready to provide to the proofs of the main theorem and its applications.

\begin{proof}[Proof of Theorem \ref{Introthm:main}]
{For any irreducible component $D'$ of $D$, we have that $\deg(D')=1$ by Theorem \ref{thm:key} since $(X,D')$ is $F$-liftable.} Thus, we can write $D = \sum_{i=1}^N H_i$ for some distinct hyperplanes $H_i = \{f_i = 0\}$. It then suffices to show that $f_i$ are linearly independent, since in that case $f_i$ form part of a basis of $(k^{n+1})^*$ up to a linear automorphism of $\mathbb{P}^n_k$.
Suppose, for contradiction, that $f_i$ are not linearly independent.  Then we can take $0<m<N$ such that $f_1, f_2, \ldots, f_m$ are linearly independent, but $f_1, f_2, \ldots, f_{m+1}$ are linearly dependent. As $(\mathbb{P}^n_k, D)$ is $F$-liftable, it is globally $F$-split (Remark \ref{rem:Frobenius liftable singularities} (1)), and $-(K_{\mathbb{P}^n_k}+D)$ is effective (cf.~\cite[Theorem 4.3 (2)]{SS10}).
This shows $N\leq n+1$, and $m\leq n$. Thus, we have
\[
\emptyset\neq W \coloneqq \bigcap_{i=1}^m H_i \subset H_{m+1}.
\]
Let $E$ be the exceptional divisor of the blow-up of $\mathbb{P}^n_k$ along $W$. Then, by \cite[Lemma 2.29]{KM98}, we have the following inequality of discrepancies:
\[
a(E, \mathbb{P}^n_k, D) \leq a\left(E, \mathbb{P}^n_k, \sum_{i=1}^{m+1} H_i\right) = -2.
\]
This contradicts the assumption that $(\mathbb{P}^n_k, D)$ is lc (Remark \ref{rem:Frobenius liftable singularities}(2)).
\end{proof}

\begin{thm}\label{thm:toric image in char p}
    Let $X$ be a smooth projective variety of Picard rank 1 over an algebraically closed field $k$ of characteristic $p > 0$, and let $D$ be a reduced divisor on $X$. Suppose there exists a finite surjective morphism $f\colon Y \to X$ of degree prime to $p$ such that $(Y, D_Y)$ is a toric pair. Then $X \cong \mathbb{P}^n_k$, and $D$ is a toric divisor with respect to the standard toric structure on $\mathbb{P}^n_k$, up to an automorphism of $\mathbb{P}^n_k$.
    Here, $D_Y$ denotes the sum of the components of $f^{-1}(D)_{\mathrm{red}}$ along which the ramification index of $f$ is prime to $p$.
\end{thm}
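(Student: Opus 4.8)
<br>

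The plan is to reduce Theorem \ref{thm:toric image in char p} to Theorem \ref{Introthm:main} by transferring $F$-liftability of the toric pair $(Y, D_Y)$ along the finite morphism $f \colon Y \to X$. First I would recall that a toric pair is always $F$-liftable: the toric Frobenius lifts the absolute Frobenius, and each toric boundary component is totally invariant under it, so $(Y, D_Y)$ satisfies the hypotheses of Definition \ref{def:F-lift} (this is the content of the original results of Buch–Thomsen–Lauritzen–Mehta and is used in \cite{AWZ}). The main structural input is then a descent statement: if $f \colon Y \to X$ is finite surjective of degree prime to $p$ and $(Y, D_Y)$ is $F$-liftable, then $(X, D)$ is $F$-liftable, where $D$ is the image of $D_Y$ under $f$ (equivalently, $D$ is the divisor such that $D_Y$ is the ``tame part'' of $f^{-1}(D)_{\mathrm{red}}$). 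Granting this, Theorem \ref{Introthm:main} applies directly to $(X, D)$ once we know $X \cong \mathbb{P}^n_k$, which follows from the Occhetta–Wiśniewski theorem (Theorem \ref{thm:Occhetta-Wisniewski}): $Y$ is a proper toric variety, $f$ is surjective and, being of degree prime to $p$, is generically étale hence separable, so $X$ is isomorphic to projective space. Then $D$ is a toric divisor by Theorem \ref{Introthm:main}, and the statement about the standard toric structure follows.

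The key steps in order would be: (1) observe $X \cong \mathbb{P}^n_k$ via Theorem \ref{thm:Occhetta-Wisniewski}, using separability of $f$ coming from $\deg f$ prime to $p$; (2) prove the descent of $F$-liftability along $f$; (3) apply Theorem \ref{Introthm:main} to conclude. For step (2), the mechanism is standard deformation theory: let $(\widetilde Y, \widetilde D_Y, \widetilde F_Y)$ be the $W_2(k)$-lift of $(Y,D_Y)$ with its lifted Frobenius. Because $\deg f$ is prime to $p$, the trace map gives a splitting of $\sO_X \to f_*\sO_Y$, and — crucially — one can average over the situation to descend the lift. More precisely, one lifts $X$ to $\widetilde X$ over $W_2(k)$ using that $X$ is (globally) $F$-split (it is an image of $Y$, or directly: $(X,D)$ will be shown $F$-split), then lifts $f$ to $\widetilde f \colon \widetilde Y \to \widetilde X$ (the obstruction to lifting $f$ lives in a cohomology group annihilated by $\deg f$, hence vanishes after the prime-to-$p$ hypothesis), and finally produces $\widetilde F_X$ on $\widetilde X$ from $\widetilde F_Y$ and $\widetilde f$ by descent through the trace, checking that the boundary $\widetilde D$ (the image of $\widetilde D_Y$) is totally invariant: $\widetilde F_X^* \widetilde D_r = p \widetilde D_r$ on the normal crossing locus. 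The normal-crossing-in-codimension-conditions on $(X,D)$ come from $F$-purity of $(X,D)$, which is inherited from $F$-purity of $(Y, D_Y)$ via the prime-to-$p$ splitting (tameness of ramification along $D_Y$ is exactly what makes the boundary behave well, cf.\ the definition of $D_Y$ in the statement).

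The main obstacle I expect is step (2), the descent of the Frobenius lift together with the boundary condition. Lifting $X$ and lifting $f$ separately are governed by obstruction classes that are killed by $\deg f$, so the prime-to-$p$ hypothesis handles those cleanly; the delicate point is compatibility — arranging the lift $\widetilde f$, the $W_2(k)$-models $\widetilde X$ and $\widetilde D$, and the lifted endomorphism $\widetilde F_X$ to all be mutually compatible and to satisfy $\widetilde F_X \circ i = i \circ F_X$ and $\widetilde F_X^*(\widetilde D_r|_{\widetilde U}) = p(\widetilde D_r|_{\widetilde U})$ simultaneously. One has to be careful that the normal crossing locus of $(X,D)$ pulls back (at least over an open set of large codimension complement) to the normal crossing locus of $(Y, D_Y)$ under the tame part of $f$, so that the relative-normal-crossing lifting condition can be checked upstairs and pushed down. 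A clean way to organize this is to work étale-locally where $f$ looks like a tame Kummer cover $x_i \mapsto x_i^{e_i}$ with $e_i$ prime to $p$, lift everything explicitly there, and then glue using the uniqueness of such lifts up to the group action; the averaging/trace argument over $\deg f$ is what guarantees the local lifts glue. Once this descent is in place, the rest of the proof is immediate from Theorems \ref{thm:Occhetta-Wisniewski} and \ref{Introthm:main}.
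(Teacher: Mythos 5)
Your proof follows the same route as the paper: apply Occhetta--Wi\'sniewski (Theorem \ref{thm:Occhetta-Wisniewski}) to get $X \cong \mathbb{P}^n_k$, use that toric pairs are $F$-liftable, descend $F$-liftability along the prime-to-$p$ cover, and conclude by Theorem \ref{Introthm:main}. The paper simply cites \cite[Theorem 3.11]{Kawakami-Takamatsu} for the descent step that you sketch (and \cite[Remark 2.7 (3)]{Kaw4} for $F$-liftability of toric pairs), so no new argument is needed there.
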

\begin{proof}
    By Theorem \ref{thm:Occhetta-Wisniewski}, $X \cong \mathbb{P}^n_k$.
    Since $(Y,D_Y)$ is  toric, it is $F$-liftable (cf.~\cite[Remark 2.7 (3)]{Kaw4}), and
    $(X,D)$ is $F$-liftable by \cite[Theorem 3.11]{Kawakami-Takamatsu}.
    Now, by Theorem \ref{Introthm:main}, we conclude.
\end{proof}

\begin{proof}[Proof of Theorem \ref{Introthm:main2}]
   We take a model $f_A\colon (Y_A,f_A^{-1}(D_A))\to (X_A,D_A)$ of $f\colon (Y,f^{-1}(D))\to (X,D)$ over a finitely generated $\mathbb{Z}$-subalgebra $A$ of $k$.
   By enlarging $A$, for every closed point $s\in \Spec A$, we may assume that $\deg(f_s)$ is prime to $p(s)=\mathrm{char}(k(s))$ and ramification degree of $f_{s}$ along every component $f_s^{-1}(D_s)$ is prime to $p(s)$.
   As proven in Theorem \ref{thm:toric image in char p}, 
   $(X_s,D_s)$ is $F$-liftable, $X_s\cong \mathbb{P}_{k(s)}^n$, and $\deg_{X_s}(D'_s)=1$ for every component $D'_s$ of $D_s$.
   Therefore, $(X,D)$ is lc, $X\cong \mathbb{P}_{k}^n$, and $\deg_{X}(D')=1$ for every component $D'$ of $D$. 
   Now, as in the proof of Theorem \ref{Introthm:main}, we can conclude that all defining equations of irreducible components of $D$ are linearly independent.
\end{proof} 

\providecommand{\bysame}{\leavevmode\hbox to3em{\hrulefill}\thinspace}
\providecommand{\MR}{\relax\ifhmode\unskip\space\fi MR }
\providecommand{\MRhref}[2]{%
  \href{http://www.ams.org/mathscinet-getitem?mr=#1}{#2}
}
\providecommand{\href}[2]{#2}


\end{document}